\documentclass[12pt]{amsart}
\usepackage[colorlinks, linkcolor=blue, anchorcolor=blue, citecolor=blue]{hyperref}
\usepackage{amsmath}
\usepackage{amsfonts}
\usepackage{amsthm}
\usepackage{geometry}
\usepackage{mathrsfs}
\usepackage{amssymb}
\usepackage[percent]{overpic}
\usepackage{bm}
\usepackage[all]{xy}
\usepackage{graphicx}
\usepackage{subfigure}
\usepackage{latexsym}
\usepackage{epigraph}
\usepackage{fancyhdr}
\usepackage{tikz}
\usepackage{color}
\usepackage{mathtools}


\newcommand*\circled[1]{\tikz[baseline=(char.base)]{
            \node[shape=circle,draw,inner sep=0.8pt] (char) {#1};}}

\numberwithin{equation}{section}
\geometry{left=3cm,right=3cm,top=2.5cm,bottom=2cm}

\bibliographystyle{amsalpha}

\newtheorem*{obser}{Observation}
\newtheorem*{tuc}{Topological uniformness condition}

\newtheorem{definition}{Definition}[section]

\newtheorem{theorem}{Theorem}[section]
\newtheorem{corollary}{Corollary}[section]
\newtheorem{prop}{Proposition}[section]

\theoremstyle{remark}
\newtheorem{remark}{Remark}[section]
\newtheorem*{ack}{Acknowledgement}

\def\mod{\operatorname{mod}}
\def\sing{\operatorname{Sing}}


\def\dist{\operatorname{dist}}

\def\hc{\operatorname{\widehat{\mathbb C}}}
\def\C{\operatorname{\mathbb C}}
\def\ud{\operatorname{\mathbb D}}

\def\s{\operatorname{\mathcal{S}}}
\def\sing{\operatorname{Sing}}

\def\k{\operatorname{\mathcal{K}}}

\begin{document}
\title{Entire functions arising from trees}
\author{Weiwei Cui}
\date{}

\address{Mathematiches Seminar, Christian-Albrechts-Universit\"at zu Kiel, Ludewig-Meyn-Str. 4, 24098 Kiel, Germany.}
\email{cui@math.uni-kiel.de}

\subjclass[2010]{30D15 (primary), 30D20, 30F20 (secondary)}
\keywords{Entire functions, trees, Shabat, Riemann surfaces, the type problem}

\maketitle

\begin{abstract}
Given any infinite tree in the plane satisfying certain topological conditions, we construct an entire function $f$ with only two critical values $\pm 1$ and no asymptotic values such that $f^{-1}([-1,1])$ is ambiently homeomorphic to the given tree. This can be viewed as a generalization of a result of Grothendieck to the case of infinite trees. Moreover, a similar idea leads to a new proof of a result of Nevanlinna and Elfving.
\end{abstract}

\section{Introduction}

An entire function is called a \emph{Shabat} entire function if it has only two critical values and no asymptotic values. In case that it is a polynomial, it is called a Shabat polynomial. Without loss of generality, we may assume that the two critical values are $\pm 1$. For a Shabat entire function $f$, we define $T_f:=f^{-1}([-1,1])$, which can be considered as a graph embedded in $\C$ whose vertices are the preimages of $\pm 1$ and whose edges are the preimages of $(-1,1)$. For instance, the function $z\mapsto\sin(z)$ is a Shabat entire function. The following observation for Shabat entire functions is clear.

\begin{obser}
Let $f$ be a Shabat entire function. Then $T_f$ is a tree. Moreover,
\begin{itemize}
\item[$(1)$] If $f$ is a polynomial, then $T_f$ is a finite tree.
\item[$(2)$] If $f$ is transcendental, then $T_f$ is an infinite tree.
\end{itemize}
\end{obser}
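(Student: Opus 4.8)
The plan is to check, in order, that $T_f$ is a locally finite graph embedded in $\C$, that it contains no cycle, that it is connected, and finally the dichotomy in $(1)$--$(2)$; of these only connectedness requires real work. I start from the local structure. At a point $z_0$ with $f(z_0)\notin\{-1,1\}$ the map $f$ is a local homeomorphism, so near $z_0$ the set $T_f=f^{-1}([-1,1])$ is either empty or a single analytic arc through $z_0$. At a point $z_0$ with $f(z_0)=\pm1$ and local degree $d\ge1$ we have $f(z)=f(z_0)+c(z-z_0)^d(1+o(1))$, and pulling back a short subsegment of $[-1,1]$ issuing from the endpoint $\pm1$ shows that near $z_0$ the set $T_f$ consists of exactly $d$ arcs emanating from $z_0$. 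Since $f^{-1}(\{-1,1\})$ is a discrete closed subset of $\C$, this presents $T_f$ as a locally finite $1$-complex embedded in $\C$, with vertex set $f^{-1}(\{-1,1\})$ and with edges the components of $f^{-1}((-1,1))$; so it is meaningful to ask whether it is a tree.

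Next, $T_f$ contains no cycle. A cycle would contain an embedded loop $\gamma\subset T_f$, which by the Jordan curve theorem bounds a bounded Jordan domain $G$. The harmonic function $\Im f$ extends continuously to $\overline G$ and vanishes on $\partial G=\gamma$, since $f(\gamma)\subset[-1,1]\subset\mathbb R$; hence by the maximum principle $\Im f\equiv 0$ on $G$, so $f(G)\subset\mathbb R$, contradicting the open mapping theorem. (The same argument shows that $\C\setminus T_f$ has no bounded component.) Thus $T_f$ is a forest.

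The main step --- and the one I expect to be the real obstacle --- is connectedness, which uses crucially that $-1,1$ are the only critical values and that $f$ has no finite asymptotic values. First I would check that $f\colon\C\setminus f^{-1}(\{-1,1\})\to\C\setminus\{-1,1\}$ is a covering map: it is a local homeomorphism, and a failure of path lifting would give a lifted path that leaves every compact subset of $\C\setminus f^{-1}(\{-1,1\})$, tending in $\C$ either to $\infty$ --- forcing the (finite) endpoint of the underlying path to be an asymptotic value --- or to a point of $f^{-1}(\{-1,1\})$, which is incompatible with $f$ tending along the path to a value outside $\{-1,1\}$. As $\C\setminus f^{-1}(\{-1,1\})$ is $\C$ with a discrete closed set removed, it is connected, so the monodromy of $\pi_1(\C\setminus\{-1,1\})$ on the fibre $f^{-1}(0)$ is transitive. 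Now $\pi_1(\C\setminus\{-1,1\})$ is generated by small loops $\alpha,\beta$ based at $0$ around $-1$ and $1$; take $\alpha$ to run along $[-1,0]$ almost to $-1$, once around a small circle about $-1$, and back. The lift of $\alpha$ based at any $p\in f^{-1}(0)$ then runs inside $T_f$ along an edge up to near a vertex $v\in f^{-1}(\{-1\})$ --- using that $-1$ is not an asymptotic value, the lift of $[-1,0]$ cannot escape to $\infty$ --- then circles $v$ through a short arc joining two edges of $T_f$ incident to $v$, then returns inside $T_f$; hence the endpoint of the lift lies in the same component of $T_f$ as $p$. Iterating over a word in $\alpha,\beta$ and using transitivity, all of $f^{-1}(0)$ lies in one component $\mathcal T$ of $T_f$. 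Finally every $z\in T_f$ is joined inside $T_f$ to $f^{-1}(0)$: if $f(z)\in(-1,1)$ lift the subsegment of $[-1,1]$ from $f(z)$ to $0$, whose lift stays over $[-1,1]$ hence in $T_f$, and if $f(z)=\pm1$ first move off $z$ along an incident edge. So $T_f=\mathcal T$ is connected, and with the previous paragraph $T_f$ is a tree.

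For the dichotomy, if $f$ is a polynomial then it is proper, so $T_f=f^{-1}([-1,1])$ is compact; a compact locally finite graph is finite, which gives $(1)$ (alternatively one can count $V-E$ via Riemann--Hurwitz and Euler's formula). If $f$ is transcendental then $\infty$ is an essential singularity, so by the great Picard theorem at least one of $-1,1$ is attained infinitely often, whence the vertex set of $T_f$ is infinite and $T_f$ is an infinite tree, which gives $(2)$. The delicate points are entirely inside the connectedness step: showing that $f$ genuinely covers $\C\setminus\{-1,1\}$, which is exactly where the absence of finite asymptotic values enters, and checking that the loop lifts stay inside $T_f$ apart from controlled excursions near vertices.
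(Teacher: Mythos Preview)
The paper does not actually prove this observation; it simply declares it ``clear'' and moves on. Your proof is correct and supplies exactly the details one would want, the substantive step being connectedness. Your approach---showing that $f$ restricts to a covering $\C\setminus f^{-1}(\{-1,1\})\to\C\setminus\{-1,1\}$, invoking transitivity of the monodromy on $f^{-1}(0)$, and checking that the generators $\alpha,\beta$ of $\pi_1(\C\setminus\{-1,1\})$ act on $f^{-1}(0)$ by permutations that preserve the components of $T_f$---is clean and makes the role of the hypothesis ``no finite asymptotic values'' completely explicit: it is used once to get the covering property (path lifts cannot escape to $\infty$) and once to ensure each edge actually terminates at a vertex over $\pm1$ rather than running off to infinity. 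The acyclicity argument via $\Im f$ and the maximum principle, and the finite/infinite dichotomy via properness versus the great Picard theorem, are standard and correct. In short, you have given a full proof where the paper gives none.
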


We say that two trees $T_1$ and $T_2$ are \emph{equivalent}, if there is a homeomorphism $\varphi: \C\to\C$ such that $\varphi(T_1)=T_2$ with vertices and edges being mapped to vertices and edges respectively. In this sense, we also say that $T_1$ is ambiently homeomorphic to $T_2$. If $T=T_f$ for some Shabat entire function $f$, then $T$ is called a \emph{true tree}.

An inverse problem to the above observation asks for the realization of Shabat entire functions from any given trees embedded in the plane, or equivalently, finding a true tree which is equivalent to the given one. If such a true tree exists, we say that it is a \emph{true form} of the given tree. If a tree is finite, the inverse problem has a positive solution which is essentially due to Grothendieck, see \cite{shabat1} and \cite{girondo} for instance. That is to say, each finite tree $T$ has a true form. This is an important aspect of Grothendieck's theory of dessins d'enfants. For other aspects of finite true trees, see \cite{bishop4}.
However, in case that a tree is infinite, it is possible that no corresponding true tree exists (see Corollary \ref{cor21} and the examples in Section 5).

In this paper we shall explore some topological conditions on the tree that ensure the existence of true forms. As discussed above, the existence of true forms is equivalent to the existence of certain Shabat entire functions. Therefore, essentially we need to construct Shabat entire functions based on the information given by the tree.

Our work is motivated by Bishop's recent groundbreaking technique of quasiconformal folding \cite{bishop1}. More precisely, he proved that for any infinite tree $T$ in the plane satisfying a certain \emph{bounded geometry} condition, there is always a Shabat entire function $f$ such that $T_{f}$ approximates $T$ in the sense that $T_f$ is the quasiconformal image of a tree $T'$ obtained by adding some branches to $T$. In general, $T_f$ is not equivalent to $T$, but $T_f$ can be chosen to lie in a small neighborhood of $T$.

For some basic knowledge on graph theory, we refer to \cite{diestel}. To state our conditions, we first introduce some definitions.

\begin{definition}[Semi- and bi-infinite path]\label{path}
Let $T$ be an infinite tree in the plane. An bi-infinite path $\gamma$ in $T$ is a homogeneous tree in $T$ of valence two. A semi-infinite path is a tree of local valence two at every vertex except for one at which the local valence is one.
\end{definition}

\begin{definition}[Kernel]\label{ker}
Let $T$ be an infinite tree in the plane. A \emph{kernel} of $T$, denoted by $\mathcal{K}(T)$, is defined as a semi-infinite path if $T$ has only one complementary component in the plane, or as the union of all bi-infinite paths in $T$ otherwise.
\end{definition}

\begin{figure}[htbp] 
    \begin{minipage}[t]{0.5\linewidth}
    \centering
    \includegraphics[width=6cm]{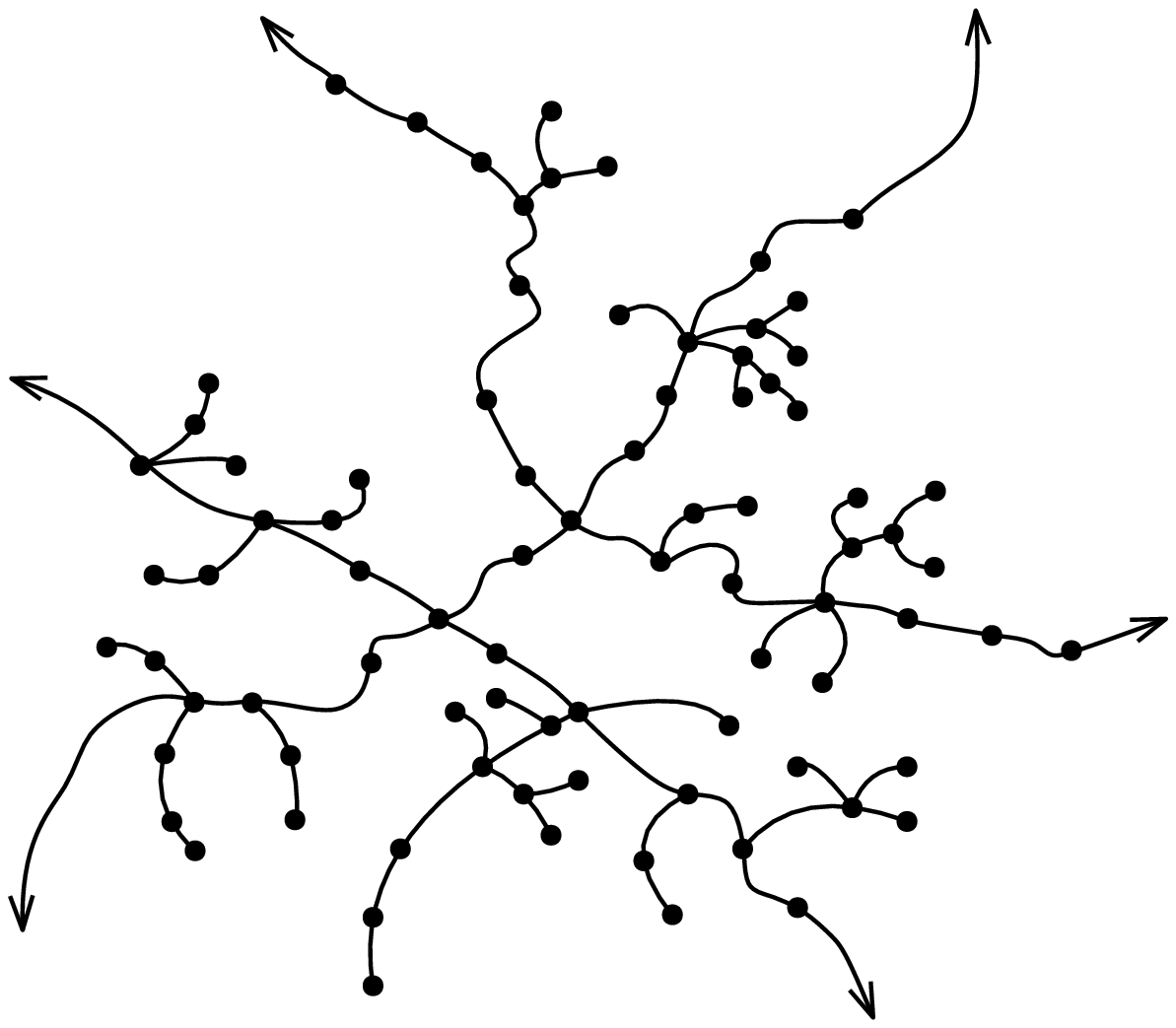}
    \end{minipage}%
 \begin{minipage}[t]{0.5\linewidth}
    \centering
   \includegraphics[width=6cm]{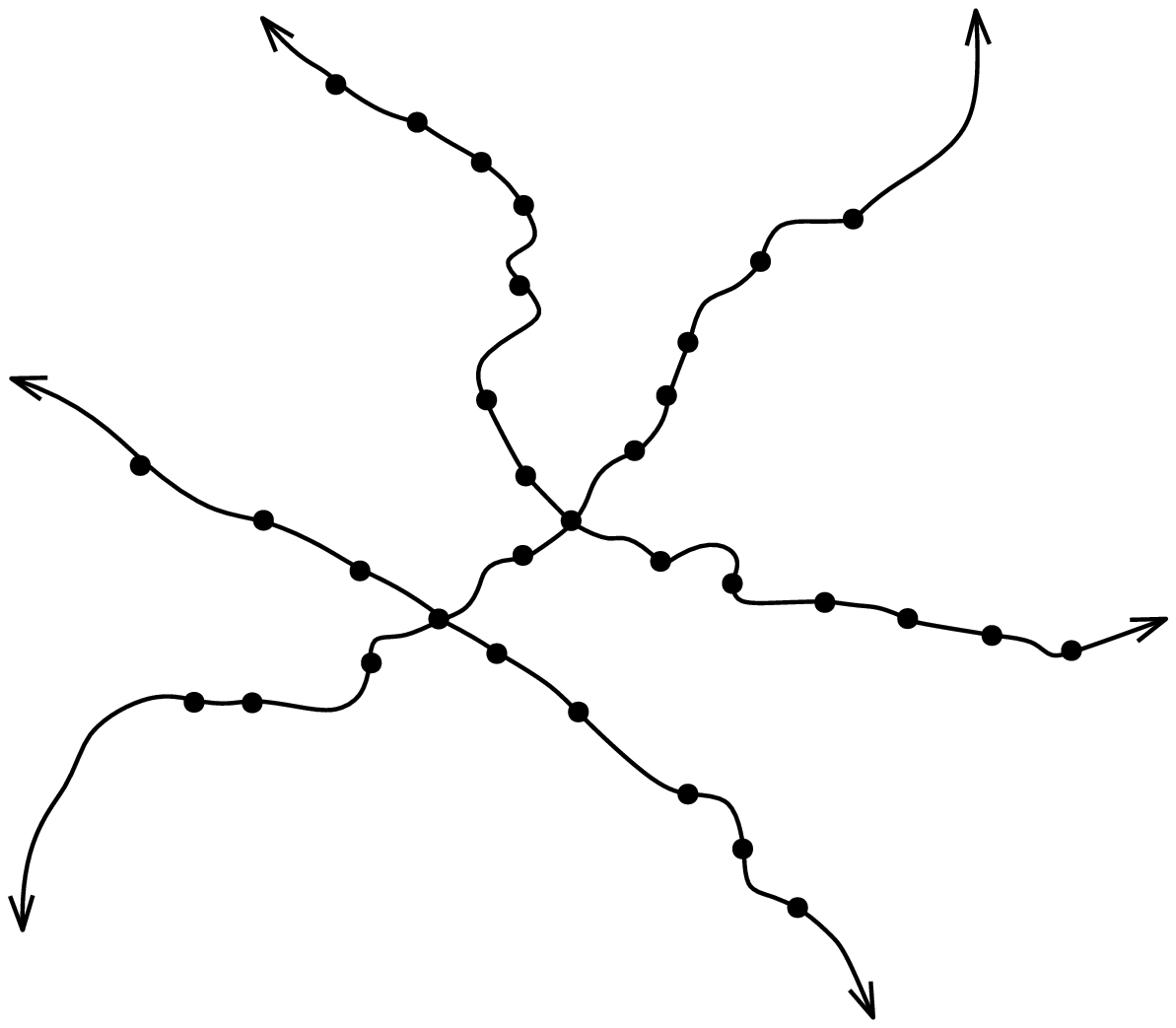} 
    \end{minipage}
    \caption{Left: An infinite tree with more than one complementary component. Right: The kernel of the left-hand side tree.}
    \label{figure1}
\end{figure}

\begin{figure}[htbp]
    \begin{minipage}[t]{0.5\linewidth}
    \centering
    \includegraphics[width=6cm]{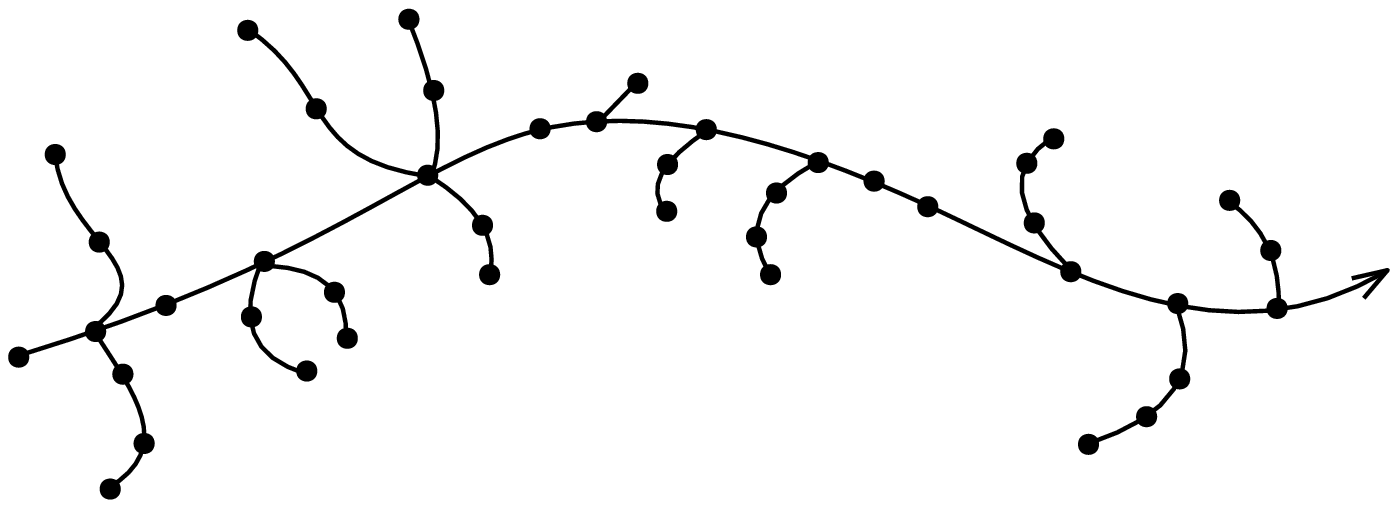}
    \end{minipage}%
 \begin{minipage}[t]{0.5\linewidth}
    \centering
   \includegraphics[width=6cm]{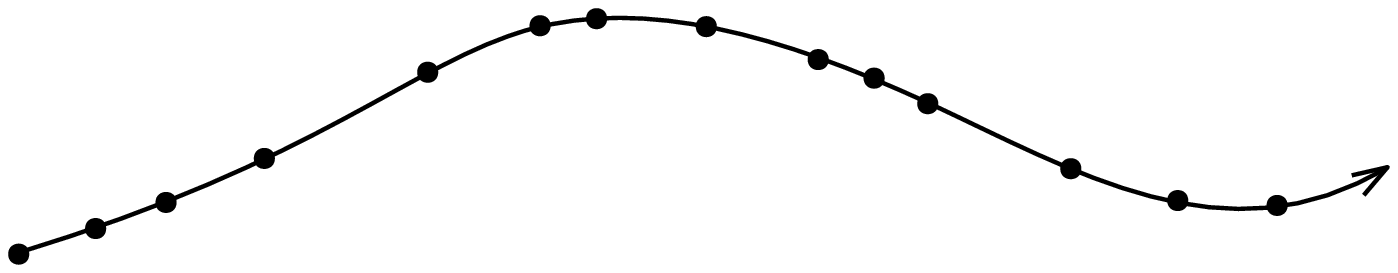}
    \end{minipage}
    \caption{An infinite tree with only one complementary component and one of its kernel.}
    \label{figure2}
\end{figure}

Roughly speaking, the kernel of a tree is obtained by cutting off all finite branches attached on the tree. See Figure \ref{figure1}. It is easy to see that a homogeneous tree coincides with its kernel. The notion of kernel is well defined in the sense that if an infinite tree $T$ has finitely many ($\geq 2$) complementary components in the plane, then the kernel $\k(T)$ is unique. In case that the tree has exactly one unbounded complementary component (which indeed happens for some Shabat entire functions, such as $z\mapsto\cos\sqrt{z}$), there is no bi-infinite path. In this situation, we consider a path which connects any \emph{fixed} finite vertex to $\infty$ without backtracking in the tree instead. See Figure \ref{figure2}.

An infinite tree (or more generally a connected graph) can be viewed as a metric space. To be more precise, a graph $G$ is a pair $(V, E)$, where $V$ is a set of vertices and $E$ is a set of pairs of elements in $V$ which are called edges. Then the length of a path is defined to be the number of edges, and the distance of two vertices is the length of the shortest path connecting them. We call this metric the \emph{word metric} of the tree. Thus we can speak of \emph{word distance} on the tree. Moreover, endowed with this metric any connected tree will be a geodesic metric space. In the following, we will use $d(A,B)$ to denote the word distance between two vertices or two subsets or a vertex and a set.

Now we can formulate our conditions as follows.

\begin{tuc}[TUC]\label{tuc}
Let $T$ be an infinite  tree in the plane, satisfying the following conditions:
\begin{itemize}
\item[$(T1)$] the local valence of the tree is uniformly bounded above;
\item[$(T2)$] $T$ has finitely many complementary components in the plane;
\item[$(T3)$] $d(v, \k(T))$ is uniformly bounded above; that is, there exists $M\in\mathbb{N}$ such that $d(v, \k(T))\leq M$ for all vertices $v$ of $T$.
\end{itemize}
\end{tuc}

Our main result can be stated as follows.

\begin{theorem}[Entire functions from trees]\label{rea}
Let $T$ satisfy the topological uniformness condition. Then $T$ has a true form.
\end{theorem}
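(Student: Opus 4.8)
The plan is to build the Shabat entire function $f$ by constructing its domain as a Riemann surface spread over the sphere, following the classical Nevanlinna–Speiser line-complex approach, but adapted so that the preimage structure realizes the given tree $T$ rather than an abstract combinatorial skeleton. First I would fix the tree $T$ satisfying the TUC and choose its kernel $\k(T)$; by $(T2)$ the kernel is a finite union of bi-infinite paths (or a single semi-infinite path), and by $(T3)$ every vertex of $T$ sits within word distance $M$ of $\k(T)$. The idea is that the complementary components of $T$ in the plane correspond to the ``ends'' over which $f$ behaves like $z \mapsto \cos\sqrt{z}$-type or $z \mapsto \cos z$-type pieces, i.e. logarithmic-free but with unbounded degree growth controlled by the combinatorics of $T$. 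I would first handle the model case where $T = \k(T)$ is a homogeneous tree, producing the local pieces, and then graft on the finite branches.

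The key steps, in order: (i) \emph{Decompose the plane along $T$.} Using $(T2)$, the tree $T$ cuts $\C$ into finitely many closed complementary regions $\Omega_1, \dots, \Omega_k$; color the two sides of each edge of $T$ alternately black and white, which is consistent since $T$ is a tree (bipartite dual). (ii) \emph{Build the surface.} Over the upper and lower half-planes $\h^{\pm}$ (with $[-1,1]$ as the common boundary slit thought of as the tree), take one copy of $\h^+$ for each white face-region and one copy of $\h^-$ for each black face-region, then glue them along edges exactly as prescribed by the incidences in $T$; this produces a simply connected Riemann surface $R$ spread over $\hc \setminus \{\text{critical values } \pm 1\}$, with branch points of finite order (bounded by $(T1)$) over $\pm 1$ and no logarithmic branch points (no asymptotic values). (iii) \emph{Determine the type.} By the uniformization theorem $R$ is conformally either $\C$ or $\ud$; I must show it is $\C$, i.e. $R$ is parabolic. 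This is where $(T3)$ enters decisively: the bound $d(v,\k(T)) \le M$ forces the face-regions to be ``uniformly fat'' and the surface to look, at large scale, like finitely many strips glued along the kernel — each strip being parabolic (as for $\sin$ or $\cos\sqrt{z}$) — and adding uniformly bounded finite trees cannot destroy parabolicity. I would make this precise either via an extremal-length / Grötzsch-type argument estimating the modulus of the family of curves escaping to $\infty$ in $R$, or by exhibiting an exhaustion with the right growth of harmonic measure, comparing $R$ to the known parabolic models. (iv) \emph{Recover $f$.} Parabolicity gives a conformal map $\psi : \C \to R$, and composing with the projection $\pi : R \to \hc$ yields $f = \pi \circ \psi$, an entire function with critical values $\pm 1$ and no asymptotic values; by construction $f^{-1}([-1,1])$ is, as an embedded graph, combinatorially the tree $T$. (v) \emph{Ambient homeomorphism.} Finally, upgrade ``combinatorially isomorphic'' to ``ambiently homeomorphic'': since both $T$ and $T_f$ are locally finite plane trees with the same cyclic edge-orderings at every vertex (inherited from the gluing pattern) and the same complementary-component structure, a standard Schoenflies-type argument builds a homeomorphism $\varphi : \C \to \C$ with $\varphi(T) = T_f$.

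I expect step (iii), establishing parabolicity of the constructed surface $R$, to be the main obstacle — this is precisely ``the type problem'' referenced in the keywords, and it is the reason the hypotheses $(T1)$–$(T3)$ are needed rather than bounded geometry alone. The danger is that without $(T3)$ the finite branches could accumulate to make certain face-regions effectively ``thin'' at infinity, pushing $R$ to the hyperbolic side; the uniform bound $M$ on the distance to the kernel is exactly what rules this out, and the crux of the argument will be a quantitative estimate (via extremal length or a flow/capacity computation on the dual graph) showing that the escaping curve family in $R$ has infinite extremal length, equivalently that $R$ carries no Green's function. The gluing construction in (ii) and the type estimate in (iii) together form the heart of the proof; steps (i), (iv), (v) are essentially the standard dictionary between trees, line complexes, and Shabat functions, routine once the surface is in hand.
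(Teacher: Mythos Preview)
Your outline matches the paper's strategy: construct the surface spread over the sphere from the combinatorics of $T$ (your step (ii) is exactly the Speiser-graph construction the paper does via triangulation and dual graph), observe that the whole problem reduces to the type problem for that surface, and then argue parabolicity from the TUC. You also correctly identify step (iii) as the crux and have the right geometric picture, namely that $T$ looks at large scale like its kernel $\k(T)$, which in turn is a finite bouquet of strips. Where the paper is sharper is in the mechanism for (iii): rather than working with extremal length or harmonic measure directly on the Riemann surface, it invokes the Doyle--Merenkov criterion to convert parabolicity of the surface into recurrence of an \emph{extended Speiser graph} $\Gamma_\infty$ (the Speiser graph with half-plane lattices glued into the unbounded faces), and then proves recurrence by a chain of \emph{quasi-isometries} of graphs: $\Gamma_\infty \sim \Gamma_\infty^* \sim \Sigma_\infty^* \sim \Sigma_s$, where $\Sigma_\infty^*$ is the analogous object for $\k(T)$ and $\Sigma_s$ is a standard model built from $N$ half-plane lattices. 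Conditions $(T1)$--$(T3)$ are used precisely to verify the quasi-isometry constants (your ``uniformly bounded finite trees cannot destroy parabolicity'' becomes the concrete statement that collapsing each vertex to its parent on $\k(T)$ is a quasi-isometry), and recurrence of $\Sigma_s$ is checked by a short combinatorial-modulus computation (nested annuli with $\mod A_n = \mathcal{O}(n)$). Your phrase ``flow/capacity computation on the dual graph'' is in the right direction, but the paper's use of quasi-isometry invariance of graph type (Kanai) is the clean device that turns your large-scale intuition into a two-line reduction; a direct analytic estimate on $R$ would presumably work but would be considerably messier to execute.
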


In other words, the above theorem says that there is a Shabat entire function $f$ such that $T_f$ is equivalent to $T$. The next question one may ask is the \emph{uniqueness} of the true form. This follows from the fact that two entire functions with only two singular values which are topologically equivalent in the sense of Eremenko and Lyubich \cite{eremenko2} are in fact conformally equivalent \cite[Proposition 2.3]{epstein5}. Thus we see that the true form is unique up to affine maps in the plane.

We note that no condition in the TUC can be dropped. We will construct various examples to show this in Section 5. These examples also indicate that one can slightly extend the condition we imposed above.

\medskip
\noindent{\emph{Idea of proof.}} The proof of Theorem \ref{rea} is based on the criterion of the classical type problem from the geometric theory of meromorphic functions (see Section 2.1). To each such tree $T$ in the theorem, one can construct a Speiser graph $\Gamma$. Then it is well known that there is a surface $(X,p)$ spread over the sphere of class $\s$ which corresponds one-to-one to the Speiser graph $\Gamma$. If the surface is parabolic, then there is a Shabat entire function $f$ whose Speiser graph is $\Gamma$. Moreover, it will be clear from the proof that $T_f$ will be equivalent to $T$. We will show that as long as the topological uniformness condition is satisfied, the surface will be of parabolic type and hence ensures the existence of a Shabat entire function.

\medskip
\noindent{\emph{Structure of the article.}} Section 2 collects some notions and results required. Then the proof of Theorem \ref{rea} is given in Section 3. In the following section, we give a new proof of a result of Nevanlinna and Elfving. The last section is devoted to the construction of various examples showing the sharpness of our theorem.

\medskip
\begin{ack}
I would like to thank Walter Bergweiler, Alexandre Eremenko, Sergiy Merenkov and Lasse Rempe-Gillen for many useful discussions. 
\end{ack}

\section{Preliminaries}

\subsection{The type problem}

We give here a brief introduction to the \emph{type problem} in the geometric function theory. For a detailed description, we refer to \cite{eremenko5} and \cite{drasin1}.

According to the Uniformization Theorem, every open simply connected Riemann surface $X$ is conformally equivalent to either the complex plane $\C$ or the unit disk $\ud$. The Riemann surface is said to be of \emph{parabolic} or \emph{hyperbolic} type, respectively. In other words, there exists a conformal map $\phi: X_0 \to X$, where $X_0$ is $\C$ or $\ud$. The map $\phi$ is called the \emph{uniformizing map}.

A \emph{surface spread over the sphere} is an equivalence class $[(X,p)]$, where $X$ is an open, simply connected, topological surface, and $p: X\to\hc$ a topologically holomorphic map, that is, an open, continuous and discrete map. Here $(X_1, p_1)$ is equivalent to $(X_2, p_2)$ if there exists a homeomorphism $\psi: X_1\to X_2$ such that $p_1=p_2\circ\psi$.

A theorem of Sto\"{\i}low says that any topologically holomorphic map is locally modelled by the power map $z\mapsto z^k$ for some $k\in\mathbb{N}$ \cite{stoilow1}. Thus there is a unique Riemann surface structure on $X$ which makes it a Riemann surface. So there is a uniformizing map $\phi: X_0 \to X$, and hence $f:=p\circ\phi$ is a meromorphic function in $\C$ or $\ud$. The surface $(X,p)$ is called the surface associated to $f$. For a given surface spread over the sphere, the type problem is the determination of the conformal type of the surface.

Some classical results can be viewed as criteria of the type problem, for instance, Picard's theorem, Ahlfors' five islands theorem and others. In particular, we recall the following result due to Nevanlinna, which can also serve as a criterion for the type problem. Recall that a point $a\in\hc$ is said to be a \emph{totally ramified value} (of multiplicity $m\geq 2$) of a surface $(X,p)$ spread over the sphere, if all preimages of $a$ under $p$ are multiple (of multiplicity at least $m$). Take $m=\infty$ if $a$ is omitted. The following result is due to Nevanlinna.

\begin{theorem}\label{nevan}
If a surface spread over the sphere has $q$ totally ramified values of multiplicities $m_k$ for $1\leq k\leq q$, and
\begin{equation}
\sum_{k=1}^{q}\left(1-\frac{1}{m_k} \right)>2,
\end{equation}
then the surface is hyperbolic. In particular, a parabolic surface has at most four totally ramified values.
\end{theorem}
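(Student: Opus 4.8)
\medskip
\noindent\emph{Proof strategy.}
The plan is to argue by contradiction via Nevanlinna's second fundamental theorem of value distribution. Suppose the surface $(X,p)$ were parabolic. By Sto\"{\i}low's theorem $X$ carries a Riemann surface structure for which $p$ is holomorphic, and parabolicity furnishes a conformal isomorphism $\phi\colon\C\to X$; set $f:=p\circ\phi$, a non-constant meromorphic function on $\C$ (non-constant because $p$, hence $f$, is open and discrete). I would first dispose of the case that $f$ is rational of degree $d\geq 1$: the Riemann--Hurwitz formula gives $2d-2=\sum_{c}(e_c-1)$, the sum over critical points $c$ with $e_c$ the local degree, and collecting the contributions of the preimages of each totally ramified value $a_k$ (there are at most $d/m_k$ of them, each of local degree $\geq m_k$) yields $2d-2\geq d\sum_{k=1}^{q}(1-\tfrac1{m_k})$, so $\sum_{k}(1-1/m_k)\leq 2-2/d<2$, contradicting the hypothesis. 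Hence $f$ must be transcendental.

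For transcendental $f$ the main step is to feed the hypothesis into the second fundamental theorem. Recall the Nevanlinna characteristic $T(r,f)$, the counting function $N(r,a)$ and its truncation $\overline N(r,a)$, the first fundamental theorem in the form $N(r,a)\leq T(r,f)+O(1)$, and that $T(r,f)\to\infty$. Since $a_k$ is totally ramified of multiplicity $m_k$, every $a_k$-point of $f$ has local multiplicity at least $m_k$ (with the convention $1/m_k=0$ when $a_k$ is omitted), so $N(r,a_k)\geq m_k\,\overline N(r,a_k)$ and therefore $\overline N(r,a_k)\leq\frac1{m_k}T(r,f)+O(1)$. Substituting into the second fundamental theorem
\[
(q-2)\,T(r,f)\leq\sum_{k=1}^{q}\overline N(r,a_k)+S(r,f),\qquad S(r,f)=o\bigl(T(r,f)\bigr),
\]
valid as $r\to\infty$ outside a set of finite Lebesgue measure, gives
\[
\Bigl(\,\sum_{k=1}^{q}\bigl(1-\tfrac1{m_k}\bigr)-2\,\Bigr)T(r,f)\leq S(r,f)+O(1).
\]
By assumption the parenthesized constant is strictly positive, so dividing by $T(r,f)$ and letting $r\to\infty$ along a sequence avoiding the exceptional set forces it to be $\leq 0$, a contradiction. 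Therefore $(X,p)$ is hyperbolic.

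The final assertion then follows cheaply: if $(X,p)$ is parabolic, the above shows $\sum_{k=1}^{q}(1-1/m_k)\leq 2$, and since each totally ramified value has multiplicity $m_k\geq 2$ we have $1-1/m_k\geq\frac12$, whence $\frac q2\leq 2$, i.e. $q\leq 4$.

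I expect the only genuine obstacle to be the two classical inputs invoked rather than the bookkeeping above: Sto\"{\i}low's theorem (already available in the excerpt) and, above all, Nevanlinna's second fundamental theorem for meromorphic functions on $\C$, whose proof carries the analytic content — alternatively one could route the argument through Ahlfors' theory of covering surfaces, which supplies a geometric second main theorem. Within the elementary part, the points needing a little care are the handling of the finite exceptional set in the error term $S(r,f)$ (hence the passage to a subsequence $r_n\to\infty$) and the preliminary separation of the rational case, where $T(r,f)=O(\log r)$ and $S(r,f)$ is no longer $o(T(r,f))$, so one argues by Riemann--Hurwitz instead.
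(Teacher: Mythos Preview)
The paper does not actually prove this theorem: it is stated in the preliminaries as a classical result ``due to Nevanlinna'' and then simply applied (to obtain Corollary~\ref{cor21}), with no argument given. So there is no ``paper's own proof'' to compare against.

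Your argument is the standard one and is correct. Assuming parabolicity, you pass via Sto\"{\i}low and the uniformizing map to a meromorphic $f$ on $\C$, dispose of the rational case by Riemann--Hurwitz, and in the transcendental case combine the second main theorem $(q-2)T(r,f)\le \sum_k \overline N(r,a_k)+S(r,f)$ with the estimate $\overline N(r,a_k)\le \tfrac{1}{m_k}N(r,a_k)\le \tfrac{1}{m_k}T(r,f)+O(1)$ forced by total ramification, yielding $\bigl(\sum_k(1-1/m_k)-2\bigr)T(r,f)\le S(r,f)+O(1)$ and a contradiction. Your handling of the exceptional set for $S(r,f)$ and the deduction of $q\le 4$ from $1-1/m_k\ge 1/2$ are both fine. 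This is precisely Nevanlinna's original route; the alternative via Ahlfors' covering-surface theory that you mention is also valid and gives the same inequality in a more geometric guise.
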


We mentioned in the introduction that some trees do not have true forms. Indeed, the above theorem of Nevanlinna can be used to construct such trees. We only mention the following example.

\begin{corollary}\label{cor21}
Any homogeneous tree of valence $\geq 3$ does not have a true from.
\end{corollary}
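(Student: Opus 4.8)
The plan is to argue by contradiction, using Nevanlinna's criterion (Theorem~\ref{nevan}). Suppose the homogeneous tree $T$ of valence $d\geq 3$ had a true form. Then there would be a Shabat entire function $f$ with $T_f=f^{-1}([-1,1])$ ambiently homeomorphic to $T$; in particular $T_f$ is itself homogeneous of valence $d$, since an ambient homeomorphism carrying vertices to vertices and edges to edges preserves the local valence at every vertex.

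Next I would record the structural dictionary between $T_f$ and $f$. The vertices of $T_f$ split into the two sets $f^{-1}(1)$ and $f^{-1}(-1)$, and every edge of $T_f$ (a component of $f^{-1}((-1,1))$) joins a vertex of one set to a vertex of the other, so $T_f$ is bipartite with these two colour classes. Moreover, in the local normal form given by Sto\"{\i}low's theorem, near a preimage $z_0$ of $\pm 1$ at which $f$ has local degree $k$, the set $f^{-1}([-1,1])$ consists of exactly $k$ arcs emanating from $z_0$; hence the local valence of $z_0$ in $T_f$ equals the local degree of $f$ at $z_0$. Since $T_f$ is homogeneous of valence $d$, it follows that \emph{every} point of $f^{-1}(1)$ and \emph{every} point of $f^{-1}(-1)$ is a critical point of $f$ of local degree exactly $d$. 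Thus both $+1$ and $-1$ are totally ramified values of the surface $(\C,f)$ of multiplicity $d$.

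Finally, since $f$ is entire it omits $\infty$, so $\infty$ is a totally ramified value of multiplicity $m=\infty$. Collecting the three totally ramified values $+1,-1,\infty$ of multiplicities $d,d,\infty$ we get
\[
\sum_{k=1}^{3}\left(1-\frac{1}{m_k}\right)=2\left(1-\frac1d\right)+1=3-\frac2d\geq 3-\frac23=\frac73>2,
\]
using $d\geq 3$. By Theorem~\ref{nevan} the surface $(\C,f)$ would be hyperbolic, contradicting the fact that the plane is of parabolic type. Hence no such $f$ exists and $T$ has no true form.

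The argument is short, and the step that needs a little care — the one I would single out as the crux — is the identification of \emph{both} $+1$ and $-1$, rather than just one of them, as totally ramified values, which is exactly what the bipartite $2$-colouring of $T_f$ provides. I would also remark that the hypothesis $d\geq 3$ is sharp: for $d=2$ the sum above equals $2$, Nevanlinna's theorem does not apply, and indeed the bi-infinite path is realised by $z\mapsto\sin z$.
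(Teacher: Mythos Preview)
Your proof is correct and follows essentially the same approach as the paper's: both argue by contradiction via Theorem~\ref{nevan}, identifying $+1$, $-1$, and $\infty$ as three totally ramified values of the putative Shabat entire function and checking that the Nevanlinna sum exceeds $2$. Your version is simply more explicit about the dictionary between local valence and local degree (and about the sharpness at $d=2$), whereas the paper states these points tersely and plugs in $m_1=m_2=3$ directly.
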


\begin{proof}
Suppose that such a tree indeed gives a Shabat entire function $f$. Then $\pm 1$ will be two totally ramified values (another one is $\infty$) of $f$. Every vertex of the tree is a preimage of one of $\pm 1$. Hence for $q=3$,  $m_1=m_2=3$ and $m_3=\infty$, the inequality in the above theorem is satisfied, which is a contradiction since the surface corresponding to a transcendental entire function is always parabolic.
\end{proof}

\subsection{The Speiser class}

For a surface $(X,p)$ spread over the sphere, the map $p: X\to\hc$ is modelled on the power map $z\mapsto z^k$ in a neighborhood of each point $x\in X$ for some $k:=k(z)\in\mathbb{N}$. If $k\geq 2$, then $x$ is a critical point of $p$, and $p(x)$ is called a \emph{critical value} of $p$. A point $b\in\hc$ is called an \emph{asymptotic value} of $p$ if there exists a curve $\gamma:[0,\infty)\to X$, such that $\gamma(t)$ leaves every compact subset of $X$ as $t\to \infty$, and $p(\gamma(t))\to b$ as $t\to \infty$. When we say $a$ is a \emph{singular value}, we always mean it is a critical value or an asymptotic value.

A surface $(X,p)$ spread over the sphere belongs to the \emph{Speiser class}\footnote{We note here that the terminology \emph{Speiser class} is also used in other setting where a function meromorphic in the plane belongs to the Speiser class if the function has finitely many singular values. But here the setting is larger in the sense that every Speiser class function gives a parabolic surface spread over the sphere, but a Speiser class surface need not be parabolic.}, denoted by $\s$, if $p$ has only finitely many singular values. Equivalently, we also say that the set of singular values of $(X,p)$ is the smallest closed subset $\sing$ of $\hc$ such that
$$p: X\setminus\{p^{-1}(\sing)\}\to\hc\setminus\sing$$
is a covering map. A surface in class $\s$ has a combinatorial representation in terms of a \emph{Speiser graph} (also called \emph{line complex}). To give a definition of a Speiser graph, assume that $(X,p)\in\s$ has $q$ singular values $a_1, \dots, a_q$. Then one can choose an oriented Jordan curve $L$ on $\hc$ passing through $a_1, a_2, \dots, a_q$ in cyclic order. The curve $L$ decomposes $\hc$ into two components $A$ and $B$ such that $A$ is to the left-hand side of positive orientation of $L$. Then we choose two base points $\circ\in A$ and $\times\in B$ and connect them by open Jordan arcs $\gamma_j$ ($j=1,\dots,q$) such that $\gamma_j\cap\gamma_k = \emptyset$ for $j\neq k$ and $\gamma_j$ intersects the segment $(a_j, a_{j+1})$ in $L$ in exactly one point with indices modulo $q$. Then the Speiser graph $\Gamma$ of the surface $(X,p)$ is defined to be the embedding of $p^{-1}(\bigcup_j \gamma_j)$ into the plane $\C$. For some figures of Speiser graphs, see \cite[Chapter 7, Section 4]{goldbergmero} and also the figures in Section 5.

We consider Speiser graphs defined above as graphs in the sense that preimages of $\circ$ (marked as circles) and preiamges of $\times$ (marked as crosses) serve as vertices of the graph while the set of edges contains all preimages of $\gamma_j$ for all $j$. Therefore, Speiser graphs have the following properties: Each edge connects a preimage of $\circ$ and a preimage of $\times$; around each vertex there are exactly $q$ edges emanating from this vertex. Moreover, the faces around a vertex have certain cyclic order inherited from the orientation of $L$ (this can be seen by marking faces using $a_j$). For a detailed description of graphic properties of Speiser graphs we refer to \cite[Chapter 7]{goldbergmero}, in which one can find applications of Speiser graphs in the theory of meromorphic functions. An important property is that, up to a choice of a base curve, there is a one-to-one correspondence between surfaces in class $\s$ and equivalence classes of Speiser graphs. Here two Speiser graphs are equivalent if they are ambiently homeomorphic. For a detailed proof of this fact, we refer to \cite{goldbergmero}.

\begin{remark}
Transcendental entire functions in class $\s$ attract a lot of interest in transcendental dynamics since the work of Eremenko and Lyubich \cite{eremenko2}, mainly due to the fact that these functions have certain similarities to polynomials from a dynamical point of view.
\end{remark}

\subsection{The Doyle-Merenkov criterion}

There are many criteria to determine the conformal type of a surface in class $\s$ using Speiser graphs by Ahlfors, Nevanlinna, Ullrich, Wittich and others. For a detailed account, see \cite{volkovyskii}. Doyle used a modification of the Speiser graph to give a necessary and sufficient condition \cite{doyle1}. This was generalized to a more general setting by Merenkov by using a geometric method, see \cite{merenkov3}.

We say that an infinite, locally finite, connected graph is parabolic/hyperbolic, if the simple random walk on the graph is recurrent/transient. The Doyle-Merenkov criterion connects the type of a surface and that of some extended Speiser graph which is defined as follows. 

Let $\Lambda$ denote a half-plane lattice with vertices at $\mathbb{Z}\times\mathbb{N}_0$, and $\Lambda_n:=\Lambda/n\mathbb{Z}$ a half-cylinder lattice. Let $\Gamma$ be a Speiser graph. Then the \emph{extended Speiser graph} $\Gamma_n$ is defined as follows: To each unbounded face we embed a half-plane lattice $\Lambda$ by identifying corresponding edges and vertices on the boundaries, and to each face of $\Gamma$ with $2k$ edges on the boundary for $k\geq n$, we embed the half-cylinder lattice $\Lambda_{2k}$, The graph $\Gamma_n$ obtained in this way is called an \emph{extended Speiser graph} of $\Gamma$. The graph obtained by adding a half-plane lattice to each unbounded face and leaving all bounded faces unchanged is denoted by $\Gamma_{\infty}$. As a matter of fact, $\Gamma_{\infty}=\Gamma_n$ for some $n\in\mathbb{N}$ if all bounded faces of $\Gamma$ have an upper bound for the numbers of edges on the boundaries.

\begin{theorem}[Doyle-Merenkov criterion]\label{dmc}
Let $n\in\mathbb{N}$ be fixed. A surface spread over the sphere $(X,p)\in\s$ is parabolic if and only if $\Gamma_n$ is parabolic.
\end{theorem}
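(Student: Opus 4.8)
\textbf{Plan of proof for Theorem \ref{rea}.} The strategy is to translate the topological data of $T$ into a Speiser graph, invoke the standard correspondence between Speiser graphs and surfaces in class $\s$, and then verify parabolicity via the Doyle--Merenkov criterion (Theorem \ref{dmc}). First I would build a candidate Speiser graph $\Gamma$ from $T$. The natural choice is to take $q=3$ singular values $\{-1,1,\infty\}$, and to construct $\Gamma$ so that its $\circ$-vertices are the $f$-preimages of $+1$ and its $\times$-vertices the preimages of $-1$, arranged so that $\Gamma$ is (combinatorially) the ``planar dual-type'' graph of $T$: each edge of $T$ carries one edge of $\Gamma$ crossing it, bounded faces of $\Gamma$ correspond to vertices of $T$ (with a vertex of local valence $d$ giving a face with $2d$ edges), and unbounded faces of $\Gamma$ correspond to the complementary components of $T$ in the plane. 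One then checks that the resulting $\Gamma$ is a genuine Speiser graph: it is connected, bipartite, $3$-regular at each vertex, and carries the correct cyclic orientation of faces inherited from an oriented Jordan curve through $-1,1,\infty$. Since $T$ has finitely many complementary components by (T2), $\Gamma$ has finitely many unbounded faces, and by (T1) all bounded faces have uniformly bounded boundary length; hence $\Gamma_\infty=\Gamma_n$ for suitable $n$.

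Next, by the one-to-one correspondence recalled in Section 2.2, $\Gamma$ is the Speiser graph of a unique surface $(X,p)\in\s$ spread over the sphere, with singular values among $\{-1,1,\infty\}$; since $\infty$ has no preimage among the vertices (it corresponds to the unbounded faces), $X$ is noncompact and $p$ omits $\infty$, so once we know $X$ is parabolic the uniformizing map yields an \emph{entire} function $f=p\circ\phi:\C\to\C$ with critical values in $\{-1,1\}$ and no finite asymptotic values, i.e.\ a Shabat entire function. It remains to identify $T_f=f^{-1}([-1,1])$ with $T$ up to ambient homeomorphism: by construction the faces of $\Gamma$ are in bijection with the vertices of $T$ and the edges of $\Gamma$ with the edges of $T$ in a planar-incidence-preserving way, and the graph $f^{-1}([-1,1])$ is precisely the graph whose vertices are the face-centers of $\Gamma$ and whose edges cross the edges of $\Gamma$; so $T_f$ is ambiently homeomorphic to $T$, with vertices and edges matched up as required. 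This part is essentially bookkeeping once the dictionary between $T$, $\Gamma$, and $(X,p)$ is set up carefully.

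The crux is therefore to prove that $(X,p)$ is parabolic, equivalently (by Theorem \ref{dmc}) that the extended Speiser graph $\Gamma_n$ is recurrent for the simple random walk. Here is where all three conditions enter. The idea is to compare $\Gamma_n$ with a graph that is manifestly recurrent. Condition (T3) says every vertex of $T$ is within word-distance $M$ of the kernel $\k(T)$; dually this should force every bounded face of $\Gamma$ to lie within a bounded combinatorial distance of the ``spine'' of $\Gamma$ corresponding to $\k(T)$, which is either a bi-infinite strip (finitely many strips, one per pair of adjacent complementary components) or, in the one-complementary-component case, a half-strip attached to a half-plane lattice. Such a spine, together with the half-plane lattices glued into the finitely many unbounded faces, is quasi-isometric to $\mathbb{Z}$ or to $\mathbb{Z}\times\mathbb{N}_0$ or to a finite union of such pieces glued along infinite rays --- all of which are recurrent. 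Condition (T1) bounds the vertex degrees in $\Gamma$ and hence (after extension) the degrees in $\Gamma_n$, and (T2)+(T3) bound the ``width'' of $\Gamma_n$ transverse to the spine; with bounded degree and bounded width a graph is roughly-isometric to its spine, and recurrence is preserved under rough isometry between graphs of bounded degree (Kanai/Markvorsen-type stability, or directly by the Nash--Williams / flow criterion: one exhibits a sequence of disjoint edge-cutsets separating a basepoint from infinity whose sizes grow linearly, giving infinite effective resistance). I expect the main obstacle to be the precise geometric control in this last step: showing that (T3) really does translate into a uniform bound on how far the extended lattices $\Lambda_{2k}$ sit from the spine, and packaging this into an explicit recurrent test graph or an explicit sequence of cutsets with $O(L)$ edges at distance $L$. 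Once that linear-growth bound on cutsets is in hand, $\sum 1/|\text{cutset}_L|=\infty$ gives infinite resistance to infinity, hence recurrence of $\Gamma_n$, hence parabolicity of $(X,p)$, completing the proof.
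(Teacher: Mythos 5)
The statement you were asked to prove is Theorem \ref{dmc}, the Doyle--Merenkov criterion itself: the equivalence between parabolicity of a Speiser-class surface $(X,p)$ and recurrence of the simple random walk on the extended Speiser graph $\Gamma_n$. Your text is instead a proof plan for Theorem \ref{rea}, the main realization theorem for trees satisfying the TUC, and it explicitly \emph{invokes} Theorem \ref{dmc} as a black box (``verify parabolicity via the Doyle--Merenkov criterion''). As an argument for Theorem \ref{dmc} this is circular, and in fact nothing in your proposal addresses the content of that theorem: you never compare the conformal geometry of the surface $X$ (with the flat metric pulled back by $p$ away from branch points, or any uniformizing structure) to the combinatorics of $\Gamma_n$, which is the whole point of the criterion.

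Concretely, a proof of Theorem \ref{dmc} has to explain (i) why the unmodified Speiser graph $\Gamma$ can fail to detect the type (large faces of $\Gamma$ correspond to regions of the surface --- high-order branch points and logarithmic ends --- whose conformal size is not reflected by a single face), (ii) why gluing half-plane lattices $\Lambda$ into unbounded faces and half-cylinder lattices $\Lambda_{2k}$ into large bounded faces produces a bounded-geometry graph that is roughly isometric to the surface $X$ with a suitable complete metric, and (iii) why parabolicity/recurrence is preserved under such a rough isometry (Kanai-type stability between the Riemannian surface and a net/graph, which is Theorem \ref{stable} in the graph-to-graph case but here is needed in the manifold-to-graph case). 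The paper does not reprove this result either --- it cites \cite{doyle1} and \cite{merenkov3} --- but your proposal cannot be counted as a proof of it, since it neither attempts steps (i)--(iii) nor proves anything logically independent of the statement.
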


This criterion is easy to apply and has many applications, one of which is the construction of a parabolic surface with negative mean excess, see \cite{merenkov2}. This answers a question of Nevanlinna in negative.

\subsection{Quasi-isometry}

We shall use in the proof of Theorem \ref{rea} the notion of quasi-isometry between two infinite, locally finite graphs both endowed with the word metric.

\begin{definition}[Quasi-isometry]\label{qi}
Let $(X_1, d_1)$ and $(X_2, d_2)$ be two metric spaces. A map $\Phi: X_1 \to X_2$ is called a quasi-isometry, if it satisfies the following two conditions:
\begin{itemize}
\item[$(1)$] for some $\varepsilon>0$, the $\varepsilon$-neighborhood of the image of $\Phi$ in $X_2$ covers $X_2$;
\item[$(2)$] there are constants $k\geq 1$, $C\geq 0$ such that for all $x_1, x_2\in X_1$,
$$\frac{1}{k}\cdot d_{1}(x_1, x_2)-C\leq d_2 (\Phi(x_1), \Phi(x_2)) \leq k\cdot d_1(x_1, x_2)+C.$$
\end{itemize}
\end{definition}

It is easy to check that being quasi-isometric is an equivalence relation. The notion of quasi-isometry was introduced by Gromov \cite{gromov1} and Kanai \cite{kanai2}. Many properties of metric spaces are preserved under quasi-isometries. For instance, the Gromov hyperbolicity of geodesic metric spaces is stable under quasi-isometric maps. For us, a particular important property of quasi-isometry is that the type of an infinite, locally finite graph is stable under quasi-isometries. This is essentially due to Kanai, see \cite[Corollary 7]{kanai1}. 

\begin{theorem}[Stability of the type]\label{stable}
Let $\Gamma_1$ and $\Gamma_2$ be two connected, finite valence graphs endowed with the word metric, which are quasi-isometric. Then $\Gamma_1$ and $\Gamma_2$ are simultaneously hyperbolic or parabolic.
\end{theorem}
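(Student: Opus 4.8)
The plan is to reformulate parabolicity of a graph in terms of electric networks and then transport finite-energy functions along the quasi-isometry; this is essentially Kanai's argument. Equip each $\Gamma_i$ with unit conductances on every edge and, for $f\colon\Gamma_i\to\mathbb{R}$, write $\mathcal{D}_i(f)=\sum_{\{x,y\}\in E(\Gamma_i)}|f(x)-f(y)|^2$ for its Dirichlet energy. I will use the classical fact that a connected, locally finite graph $\Gamma$ with basepoint $o$ is parabolic, i.e. the simple random walk on it is recurrent, if and only if the effective conductance from $o$ to infinity vanishes, that is,
\[
\operatorname{cap}_\Gamma(o):=\inf\{\mathcal{D}(f):f\in C_0(\Gamma),\ f(o)=1\}=0,
\]
where $C_0(\Gamma)$ denotes the finitely supported functions; the vanishing of $\operatorname{cap}_\Gamma(o)$ does not depend on the choice of $o$. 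Since being quasi-isometric is an equivalence relation, and in particular symmetric, it suffices to establish the one-sided implication: if $\Phi\colon\Gamma_1\to\Gamma_2$ is a quasi-isometry and $\Gamma_2$ is parabolic, then $\Gamma_1$ is parabolic. Applying this to a quasi-inverse of $\Phi$ then yields the reverse implication, and, since \emph{hyperbolic} just means \emph{not parabolic}, the theorem follows. I may assume both graphs are infinite (otherwise both are finite and trivially recurrent). Fix a common valence bound $\Delta$ and quasi-isometry constants $k\ge1$, $C\ge0$, and put $L:=k+C$.

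The next step is to pull back near-optimal test functions. Fix $o\in\Gamma_1$, set $o':=\Phi(o)$, and, using parabolicity of $\Gamma_2$, choose $f_n\in C_0(\Gamma_2)$ with $f_n(o')=1$ and $\mathcal{D}_2(f_n)\to0$. Define $g_n:=f_n\circ\Phi\colon\Gamma_1\to\mathbb{R}$; then $g_n(o)=1$. If $\Phi(x)\in\operatorname{supp}f_n$, which lies in some ball $B_{\Gamma_2}(o',r_n)$, then the lower quasi-isometry bound gives $d(o,x)\le k\bigl(d(o',\Phi(x))+C\bigr)\le k(r_n+C)$; hence $\operatorname{supp}g_n$ lies in a ball of $\Gamma_1$, which, by bounded valence, is finite, so $g_n\in C_0(\Gamma_1)$.

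Now comes the key energy estimate. For an edge $\{x,y\}$ of $\Gamma_1$ the upper quasi-isometry bound gives $d(\Phi(x),\Phi(y))\le L$, so I fix a path $\pi_{xy}$ in $\Gamma_2$ with at most $L$ edges joining $\Phi(x)$ to $\Phi(y)$; telescoping along $\pi_{xy}$ and using Cauchy--Schwarz,
\[
|g_n(x)-g_n(y)|^2\le L\sum_{e\in\pi_{xy}}|\delta f_n(e)|^2,\qquad\text{where }\delta f_n(\{u,v\}):=f_n(u)-f_n(v).
\]
Summing over the edges of $\Gamma_1$ and interchanging the order of summation,
\[
\mathcal{D}_1(g_n)\le L\sum_{e\in E(\Gamma_2)}|\delta f_n(e)|^2\cdot\#\bigl\{\{x,y\}\in E(\Gamma_1):e\in\pi_{xy}\bigr\}.
\]
The whole argument now rests on bounding the multiplicity $m(e):=\#\{\{x,y\}:e\in\pi_{xy}\}$ uniformly in $e$. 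If $e\in\pi_{xy}$ and $u$ is an endpoint of $e$, then $d(\Phi(x),u)\le L$; hence any two vertices $x,x'$ of $\Gamma_1$ that can occur this way satisfy $d(\Phi(x),\Phi(x'))\le2L$, so $d(x,x')\le k(2L+C)$ by the lower bound, and therefore all such $x$ lie in a single ball of radius $k(2L+C)$, of which there are at most $N:=\Delta^{k(2L+C)+1}$. The same bounds the number of admissible $y$, so $m(e)\le N^2$ for every edge $e$ of $\Gamma_2$. Consequently $\mathcal{D}_1(g_n)\le L N^2\,\mathcal{D}_2(f_n)\to0$, and since $g_n\in C_0(\Gamma_1)$ with $g_n(o)=1$ this gives $\operatorname{cap}_{\Gamma_1}(o)=0$; thus $\Gamma_1$ is parabolic, which completes the one-sided implication and hence the proof.

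The step I expect to be the main obstacle is exactly this uniform bound on the multiplicity $m(e)$: it is here that the \emph{lower} (coarse-injectivity) half of the quasi-isometry definition, together with the finite-valence hypothesis, is essential — without it the pull-back of test functions would not keep the Dirichlet energy under control. Everything else is a routine comparison of network energies, together with the symmetry of the quasi-isometry relation, which promotes the one-directional statement to the stated equivalence of conformal (parabolic versus hyperbolic) types.
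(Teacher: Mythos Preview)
The paper does not prove Theorem~\ref{stable}; it merely states the result and attributes it to Kanai \cite[Corollary 7]{kanai1}. Your proposal therefore supplies what the paper omits, and it is precisely Kanai's argument: recast parabolicity as vanishing capacity, pull back near-optimal test functions along $\Phi$, and compare Dirichlet energies via a uniform multiplicity bound that comes from bounded valence together with the two-sided distance inequality. The proof is correct. Two cosmetic points: in the multiplicity step you should fix a reference vertex (say any $x_0$ with $e\in\pi_{x_0y_0}$) before saying that all admissible $x$ lie in ``a single ball'' of radius $k(2L+C)$; and it is worth remarking that the $\varepsilon$-density clause of Definition~\ref{qi} is not used in your one-sided estimate itself but only enters through the existence of a quasi-inverse, which you invoke to get the reverse implication.
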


\subsection{Combinatorial modulus}

To determine the type of a graph, we shall use the notion of combinatorial modulus, which can be viewed as a discrete version of the classical conformal modulus. The notion was used by Duffin \cite{duffin1} and Cannon \cite{cannon1}. Here we will follow the presentation in \cite[Section 5]{merenkov3}. 

A \emph{mass distribution} for a graph $\Gamma$ is a non-negative function on the set $E$ of edges of $\Gamma$. A chain in $\Gamma$ is a sequence $\{e_j\}_{j=M}^{N}$ with $-\infty\leq M \leq N \leq \infty$, where $e_j$ are edges and adjacent edges share a vertex. Let $\mathcal{C}$ be a family of chains in $\Gamma$. A mass distribution $m$ is admissible for the family $\mathcal{C}$, if $\sum m(e_j)\geq 1$ for each chain $\{e_j\}$ in $\mathcal{C}$. Then the \emph{combinatorial modulus} of the chain family $\mathcal{C}$ is defined as
$$\mod\mathcal{C}:=\inf\left\{\sum_{e\in E} m(e)^2  \right\},$$
where the infimum is taken all admissible mass distributions. The reciprocal of the combinatorial modulus is called the \emph{extremal length} of $\mathcal{C}$, denoted by $\lambda(\mathcal{C})$. The combinatorial modulus has properties similar to those of the conformal modulus (cf. \cite{ahlfors9}, \cite{soardi}).

A domain in a graph is a connected subset of the graph in the sense that every two vertices in the domain can be connected by a chain whose edges belong to this domain. An annulus in a graph is a subset of the set of edges whose complement in the graph has two disjoint domains. A sequence of annuli $\{A_n\}$ is nested if the annuli are pairwise disjoint, and $A_{n+1}$ separates $A_n$ from $\infty$. We will use $\mod A$ for an annulus $A$ to denote the combinatorial modulus of the family of chains connecting the two boundary components of $A$.

A locally finite graph $\Gamma$ is hyperbolic or parabolic if and only if $\lambda(\Gamma_v)$ is finite or infinite respectively for some vertex $v$ (and hence for every vertex) in $\Gamma$, where $\Gamma_v$ denotes the family of chains connecting $v$ to infinity. See, \cite[Corollary 3.84]{soardi}, for instance. Compare this with a similar result for the conformal modulus, see for instance \cite[Theorem 1]{drasin1}.

For our applications, to determine parabolicity it is sufficient to use the following

\begin{prop}\label{howto}
Let $\Gamma$ be a connected finite valence graph. If there exists a sequence $\{A_n\}$ of disjoint nested annuli with
$$\sum_{n=1}^{\infty} \frac{1}{\mod A_n}=\infty,$$
then $\Gamma$ is parabolic.
\end{prop}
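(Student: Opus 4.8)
The plan is to deduce parabolicity from the extremal-length criterion recalled above: it suffices to find a single vertex $v$ of $\Gamma$ with $\mod\Gamma_v=0$ (equivalently $\lambda(\Gamma_v)=\infty$), where $\Gamma_v$ is the family of chains joining $v$ to infinity. I would take $v$ to be a vertex separated from $\infty$ by $A_1$; since $A_1$ is an annulus, $\infty$ lies in one of its two complementary domains, and one picks $v$ in the other. By the nesting hypothesis each $A_n$ then separates $v$ from $\infty$ as well, so every chain $\gamma\in\Gamma_v$ must cross each $A_n$, i.e.\ contain a subchain $c_n$ belonging to the family of chains joining the two boundary components of $A_n$; and since the $A_n$ are pairwise disjoint as edge sets, the subchains $c_1,c_2,\dots$ are pairwise edge-disjoint.

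Granting this, the key estimate is the discrete analogue of the series rule for moduli. One may assume $0<\mod A_n<\infty$ for every $n$ (if $\mod A_n=0$ for some $n$, then $\mod\Gamma_v=0$ already because of that one annulus). Fix $N\in\mathbb{N}$; by the definition of $\mod A_n$ as an infimum, choose for $1\le n\le N$ a mass distribution $m_n$, supported on the edges of $A_n$, admissible for the family of chains joining the two boundary components of $A_n$, and with $\sum_e m_n(e)^2\le 2\mod A_n$. Put $a_n:=1/\mod A_n$, $S_N:=\sum_{n=1}^N a_n$, and
$$m:=\frac{1}{S_N}\sum_{n=1}^N a_n\, m_n.$$
For any $\gamma\in\Gamma_v$ one has $\sum_{e\in\gamma} m_n(e)\ge\sum_{e\in c_n} m_n(e)\ge 1$, hence $\sum_{e\in\gamma} m(e)\ge\frac{1}{S_N}\sum_{n=1}^N a_n=1$, so $m$ is admissible for $\Gamma_v$. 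Since the $m_n$ have pairwise disjoint supports, the cross terms vanish and
$$\sum_e m(e)^2=\frac{1}{S_N^2}\sum_{n=1}^N a_n^2\sum_e m_n(e)^2\le\frac{2}{S_N^2}\sum_{n=1}^N a_n^2\,\mod A_n=\frac{2}{S_N^2}\sum_{n=1}^N a_n=\frac{2}{S_N}.$$
Therefore $\mod\Gamma_v\le 2/S_N$ for all $N$, and since $S_N=\sum_{n=1}^N 1/\mod A_n\to\infty$ by hypothesis, we obtain $\mod\Gamma_v=0$, i.e.\ $\Gamma$ is parabolic. (The weights $a_n\propto 1/\mod A_n$ are precisely those that make the series rule sharp, by Cauchy--Schwarz; the factor $2$ can be replaced by any $1+\varepsilon$.)

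The step I expect to be the main obstacle --- and the only place where the hypotheses on $\{A_n\}$ and the choice of $v$ are genuinely needed --- is the combinatorial claim that an escaping chain $\gamma$ from $v$ contains an edge-disjoint family of crossing subchains $c_n$, one for each $A_n$. Pairwise disjointness of the annuli gives the edge-disjointness of the $c_n$; the condition that $\{A_n\}$ is nested with $A_{n+1}$ separating $A_n$ from $\infty$, together with $v$ lying on the bounded side of $A_1$, forces $\gamma$ to pass from the inner to the outer side of each $A_n$, and one then extracts $c_n$ from the portion of $\gamma$ realizing that passage. Carrying this out carefully (unpacking the definitions of domain, annulus and boundary component in a graph), or else simply quoting the standard series law for combinatorial moduli of nested annuli as in \cite{merenkov3} and \cite{soardi}, completes the argument.
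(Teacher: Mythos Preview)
Your argument is correct and follows the same route as the paper: pick $v$ in the inner complementary component of $A_1$ and use the extremal-length criterion together with the series inequality $\lambda(\Gamma_v)\ge\sum_n\lambda(A_n)$ for nested disjoint annuli. The only difference is one of presentation: the paper simply asserts that inequality as following from the definition of extremal length, whereas you actually carry out the weighted-mass construction that proves it.
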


This result will be used later to prove the parabolicity of surfaces associated to trees satisfying the TUC. As we cannot locate a proof of this result, we will include here a short proof.

\begin{proof}
As noted above, to show that $\Gamma$ is parabolic we can choose any vertex $v$ and show that $\lambda(\Gamma_v)=\infty$, where $\Gamma_v$ denotes the family of chains connecting $v$ to infinity. Here we choose $v$ to be a vertex belonging to the inner complementary component of $A_1$. Then
$$\lambda(\Gamma_v)\geq \sum_{n=1}^{\infty} \lambda( A_n)=\sum_{n=1}^{\infty} \frac{1}{\mod A_n}=\infty.$$
The first inequality above follows from the definition of the extremal length.
\end{proof}

\section{TUC implies parabolicity}

The proof of Theorem \ref{rea} is divided into two steps. First we construct a Speiser graph $\Gamma$ considered as the dual graph of a triangulation of the plane induced from the given tree $T$. This is a standard procedure. In the next step, we use the Doyle-Merenkov criterion to show that some extended Speiser graph $\Gamma_{\infty}$ is of parabolic type, and thus the surface corresponding to $\Gamma$ is parabolic. With this in hand we will obtain a Shabat entire function.

\subsection{Construction of a Speiser graph from the tree}

Suppose that $T$ is any given tree in the plane which satisfies the topological uniformness condition. Then $T$ can be colored in a bipartite pattern (simply choosing one vertex as black and every vertex which has word distance one as white, and then proceed). Connect each side of an edge to infinity by adding two curves at the two endpoints. This gives a triangulation of the sphere based on the given tree. See Figure \ref{triangle}. Then we consider the dual graph of this triangulation, denoted by $\Gamma$ (that is, choose a point in each triangle and then connect two points as long as the two triangles share a common edge). It is easy to see that $\Gamma$ is a Speiser graph. Thus there is a surface $(X,p)$ spread over the sphere in class $\s$ which corresponds to $\Gamma$.
\begin{figure}[htbp] 
    \centering
    \includegraphics[width=11cm]{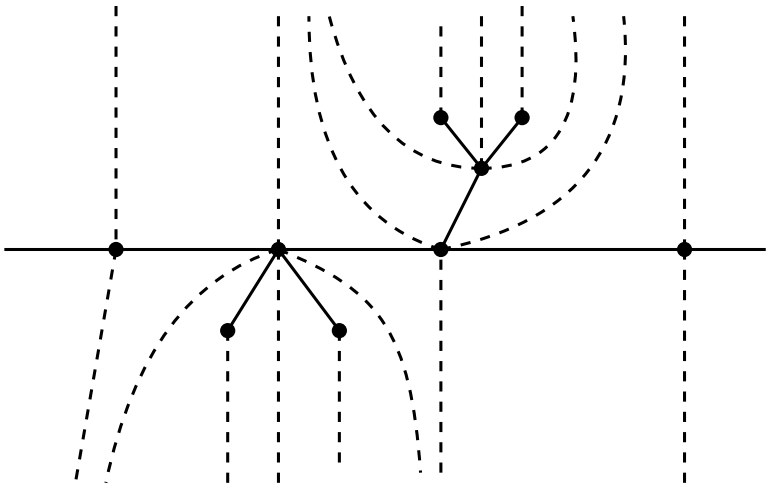}
    \caption{A triangulation of the plane based on a given tree. Every triangle has one side of an edge on the tree as one edge. The dual graph of this triangulation will be the Speiser graph associated to this tree.}
    \label{triangle}
\end{figure}

Since we start with an infinite tree in the plane with finitely many complementary components, the Speiser graph $\Gamma$ will have finitely many unbounded faces, and every other face has a uniformly bounded number of edges on its boundary.

For later purposes, we also consider the same process for the kernel $\k(T)$ of $T$, and thus obtain another Speiser graph $\Sigma$. The surface corresponding to $\Sigma$ will be parabolic. Note that the kernel of a tree satisfying the TUC also satisfies the TUC. If we knew Theorem \ref{rea} already, we could conclude that the surface of the kernel is parabolic. Instead, we will prove this fact by using Goldberg's theory of almost periodic ends. More precisely, except for a finite portion in the corresponding Speiser graph every component of the rest is a sine-end (which is a special case of periodic ends). Since there are only finitely many such ends, then the corresponding surface is parabolic (see \cite[Chapter 7, Section 7]{goldbergmero}). Thus the existence of Shabat entire functions is assured.

\subsection{Determination of the type}

Now we consider an extended Speiser graph by embedding a half-plane lattice in each unbounded face of $\Gamma$ to get an extended Speiser graph $\Gamma_{\infty}$, which is infinite and locally finite. We also denote by $\Gamma_{\infty}^{*}$ the dual graph of $\Gamma_{\infty}$.

Similarly, the Speiser graph $\Sigma$ corresponding to the kernel $\k(T)$ is used to obtain an extended Speiser graph $\Sigma_{\infty}$ and denote by $\Sigma_{\infty}^{*}$ its dual.

As can be observed from the construction, $\Gamma_{\infty}^{*}$ is nothing but topologically an extension of the tree $T$ by embedding a half-plane lattice in each complementary component; so is $\Sigma_{\infty}^{*}$.

\begin{prop}\label{equ1}
$\Gamma_{\infty}^{*}$ is quasi-isometric to $\Gamma_{\infty}$ and $\Sigma_{\infty}^{*}$ is quasi-isometric to $\Sigma_{\infty}$.
\end{prop}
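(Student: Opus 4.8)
The plan is to establish the quasi-isometry by exhibiting an explicit map between the vertex sets and checking both conditions of Definition \ref{qi}; by symmetry it suffices to treat the pair $(\Gamma_\infty^*, \Gamma_\infty)$. The key structural observation, already noted just before the statement, is that $\Gamma_\infty^*$ is (topologically) the tree $T$ with a half-plane lattice glued into each of its finitely many complementary components, while $\Gamma_\infty$ is the dual picture: its vertices sit inside the faces of $\Gamma$, which are in bijection with the vertices of $T$ together with the lattice points in the glued half-planes. I would first treat the two graphs as built out of the same combinatorial scaffold — finitely many half-plane lattices $\Lambda$ attached along boundary paths to a common ``core'' ($T$ on one side, the Speiser graph $\Gamma$ on the other). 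On each half-plane lattice $\Lambda$, both $\Gamma_\infty$ and $\Gamma_\infty^*$ restrict to graphs that are themselves quasi-isometric (indeed bi-Lipschitz) to the standard lattice $\mathbb{Z}\times\mathbb{N}_0$, with the quasi-isometry constants depending only on the local structure, not on which face we are in. So the map $\Phi$ is defined lattice-by-lattice in the obvious way, and glued across the core.

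Next I would verify the coarse-Lipschitz bounds in condition (2). The upper bound $d_2(\Phi(x_1),\Phi(x_2)) \le k\, d_1(x_1,x_2) + C$ follows because an edge of $\Gamma_\infty^*$ is crossed by boundedly many edges of $\Gamma_\infty$ and vice versa: a path of length $n$ in one graph can be tracked by a path of length at most $kn$ in the other, the multiplicative constant absorbing the bounded local valence (condition $(T1)$) and the bounded face sizes of $\Gamma$ (which hold since $T$ has finitely many complementary components, by $(T2)$, and satisfies $(T3)$). The lower bound is the same argument with the roles reversed. Condition (1), that $\Phi$ has cobounded image, is immediate: every vertex of $\Gamma_\infty$ lies in a face of $\Gamma$ adjacent to a bounded number of dual-graph vertices, so it is within bounded distance of the image of $\Phi$.

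The one genuine subtlety — and the step I expect to be the main obstacle — is the bookkeeping near the \emph{seams}, i.e.\ along the finitely many bi-infinite or semi-infinite boundary paths where a glued half-plane lattice meets the core. There one must check that the local comparison of the two graphs does not degrade: the number of $\Gamma_\infty$-edges separating two nearby $\Gamma_\infty^*$-vertices stays bounded even as one crosses from the lattice region into $T$ (or into the body of $\Gamma$). This is where conditions $(T1)$ and $(T3)$ are used in an essential way: $(T1)$ bounds the valence so that the dual faces at a vertex of $T$ have bounded size, and $(T3)$ guarantees that every vertex of $T$ is within word-distance $M$ of the kernel, so the ``decorations'' hanging off the kernel (and correspondingly the bounded faces of $\Gamma$) have uniformly bounded size. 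Once these uniform local bounds are in place, the quasi-isometry constants can be taken globally, and patching the finitely many pieces together with the core yields the claimed quasi-isometry. The argument for $\Sigma_\infty^*$ and $\Sigma_\infty$ is identical, using that $\k(T)$ also satisfies the TUC.
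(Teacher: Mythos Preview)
Your proposal is correct in spirit but takes a different and unnecessarily elaborate route compared with the paper. The paper's proof is a two-line appeal to a general fact: whenever a planar graph has uniformly bounded valence \emph{and} uniformly bounded face degree, the obvious map sending a vertex of the dual to any boundary vertex of the corresponding face is a quasi-isometry (details in \cite[Theorem C]{merenkov1}). For $\Gamma_\infty$ both hypotheses hold automatically---the Speiser graph has constant valence $3$, the lattice has valence $4$, and after filling the unbounded faces every remaining face has at most $2\cdot(\text{max valence of }T)$ edges by $(T1)$---so there is nothing to decompose. Your piecewise construction (half-plane lattices plus core, then seam analysis) certainly works, but it reproves a special case of this general duality statement by hand.

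One point of confusion worth flagging: you invoke $(T3)$ to control the ``decorations'' and hence, you say, the bounded faces of $\Gamma$. This is not right. The bounded faces of $\Gamma$ correspond to \emph{vertices} of $T$, and each such face has exactly $2k$ edges where $k$ is the local valence of that vertex; so their size is controlled by $(T1)$ alone. Condition $(T3)$ bounds the size of the finite subtrees hanging off the kernel, which is irrelevant for Proposition~\ref{equ1} and only enters later in the proof of Theorem~\ref{equtype}. Dropping the appeal to $(T3)$ would streamline your argument and make clear that this proposition is really just the bounded-geometry graph/dual correspondence.
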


\begin{proof}
All graphs here have finite valences. Then we define a map which sends a vertex $v$ of $\Gamma_{\infty}^{*}$ (resp. $\Sigma_{\infty}^{*}$) to any vertex on the boundary of the face of $\Gamma_{\infty}$ (resp. $\Sigma_{\infty}$) corresponding to $v$. Then it is easy to check that this map is a quasi-isometry. For details, we refer to \cite[Theorem C]{merenkov1}.
\end{proof}

\begin{theorem}\label{equtype}
$\Gamma_{\infty}^{*}$ is quasi-isometric to $\Sigma_{\infty}^{*}$.
\end{theorem}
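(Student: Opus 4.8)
\emph{Proof proposal.} The plan is to produce an explicit quasi-isometry $\Psi\colon\Sigma_\infty^*\to\Gamma_\infty^*$. The starting point is that $\k(T)$ is obtained from $T$ by deleting a family of finite branches which, by $(T1)$ and $(T3)$, are \emph{uniformly} bounded: each branch lies within word distance $M$ of its base vertex on $\k(T)$, and since the valence is bounded by some $V$ it has at most $D_0=D_0(M,V)$ vertices. I would first record the three elementary facts this rests on. First, $\k(T)$ is a connected subtree of $T$, so the inclusion $\k(T)\hookrightarrow T$ is an isometric embedding for the word metrics (in a tree the geodesic between two vertices of a subtree already lies in that subtree) and, by $(T3)$, has $M$-dense image; hence it is a quasi-isometry. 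Second, attaching a finite tree along a single vertex does not change the complementary components (a finite tree, having no cycles, does not separate the open region into which it is attached), so the components $U_1,\dots,U_r$ of $T$ are in natural bijection with the components $V_1,\dots,V_r$ of $\k(T)$, with $U_i\subset V_i$. Third, the boundary walk of $U_i$ is obtained from that of $V_i$ by inserting, around each kernel vertex that carries branches, a detour traversing those branches; each detour has length at most $2D_0$ and at most $V$ of them can occur at a vertex, so $\partial U_i$ is, combinatorially, the walk $\partial V_i$ with every edge replaced by a path of uniformly bounded length $\le D:=D(M,V)$. (If $T$ has a single complementary component the same description holds with $\k(T)$ a semi-infinite path.)

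Recall that $\Sigma_\infty^*$ is $\k(T)$ with a half-plane lattice $\Lambda\cong\mathbb Z\times\mathbb N_0$ glued along each $\partial V_i$, while $\Gamma_\infty^*$ is $T$ with a half-plane lattice glued along each $\partial U_i$. I would let $\Psi$ be the inclusion on $\k(T)$, and on the $i$-th lattice send $(n,m)\mapsto(\iota_i(n),m)$ into the $i$-th lattice of $\Gamma_\infty^*$, where $\iota_i$ is the order-preserving injection of the vertices of $\partial V_i$ into those of $\partial U_i$ (with consecutive gaps at most $D$). These agree on the overlap — the bottom rows, identified with $\partial V_i\subset\k(T)$ — so $\Psi$ is well defined. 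That $\Psi$ is a quasi-isometry then reduces to three verifications: (i) every edge of $\Sigma_\infty^*$ maps to a pair of vertices at distance $\le D$ (immediate from the bound on the gaps of $\iota_i$), whence $\Psi$ is $D$-Lipschitz along geodesics; (ii) the image is $\max(M,D)$-dense, since a lattice vertex of $\Gamma_\infty^*$ over a detour is within $D$ horizontally of a lattice vertex over the kernel, and a bottom-row vertex over a detour is a branch vertex, within $M$ of $\k(T)$; and (iii) the retraction $\Phi\colon\Gamma_\infty^*\to\Sigma_\infty^*$ that is the identity on $\k(T)$ and collapses each detour column — and with it the finite branch forming its bottom — onto the nearest column over the kernel, \emph{level by level}, is also $O(1)$-Lipschitz and satisfies $\Phi\circ\Psi=\mathrm{id}$, which promotes the Lipschitz bound of (i) to the lower inequality in Definition \ref{qi}.

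The difficulty here is organizational rather than conceptual: one has to pin down precisely how the half-plane lattices are attached along the boundary walks, check that the passage from $\partial V_i$ to $\partial U_i$ is a uniformly bounded modification — this is exactly where $(T1)$ and $(T3)$ must be used in tandem, via the bound $D_0$ on the size of each branch — and handle the seams between the kernel/tree part and the lattice parts, for which, as in (i) and (iii), it suffices to inspect edges one at a time. The ``level by level'' collapse in (iii) is the one point where a naive definition of $\Phi$ fails (collapsing a detour to the base vertex is not coarsely Lipschitz), so some care is needed there. (Combined with Theorem \ref{stable}, Proposition \ref{equ1}, the parabolicity of $\Sigma_\infty^*$ established in Section 3.1, and the Doyle--Merenkov criterion, Theorem \ref{equtype} then yields that the surface associated to $\Gamma$ is parabolic.)
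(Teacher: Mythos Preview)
Your proposal is correct, and the core geometric idea --- the level-preserving horizontal collapse of the lattice columns sitting over finite branches onto the column over the parent kernel vertex --- coincides exactly with the paper's map $\varphi_2$. The difference is purely one of packaging. The paper builds a single map $\varphi\colon\Gamma_\infty^*\to\Sigma_\infty^*$ in the collapsing direction (your $\Phi$), and then verifies the two distance inequalities of Definition~\ref{qi} by hand, separately on $T$, on each lattice component, and across components. You instead work primarily in the expanding direction, defining the embedding $\Psi\colon\Sigma_\infty^*\to\Gamma_\infty^*$ via the order-preserving boundary injection $\iota_i$, and then recover the lower distance bound not by a direct estimate but by exhibiting the Lipschitz left-inverse $\Phi$ and invoking $\Phi\circ\Psi=\mathrm{id}$. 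This coarse-inverse trick is a clean way to avoid the case analysis in the paper's inequalities \eqref{est3}--\eqref{est6}, and your observation that $\Phi$ must collapse detour columns \emph{level by level} (rather than all the way to the base vertex) is precisely the content of the paper's definition of $\varphi_2$ via the height parameter $k(u)$. Your boundary-walk description (the insertion of detours of length $\le 2D_0$ at each branching vertex, yielding $\iota_i$ with gaps $\le D$) is also somewhat more explicit than the paper's treatment, which leaves the combinatorics of the identification between the bottom row of the lattice and $\partial U_i$ largely implicit.
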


\begin{proof}
To prove the theorem, we need to construct a quasi-isometry $\varphi$ between $\Gamma_{\infty}^{*}$ and $\Sigma_{\infty}^{*}$, both of which are endowed with the word metric. As we noted before, the graph $\Gamma_{\infty}^{*}$ can be obtained by embedding a half-plane lattice in each face of the tree $T$; while $\Sigma_{\infty}^{*}$ is obtained by embedding a half-plane lattice in each face of the kernel $\k(T)$ of the tree $T$. Recall that $\k(T)$ is obtained from $T$ by cutting all finite trees attached to some vertices. 

The quasi-isometry will be constructed by gluing two maps: the first one is a quasi-isometry which maps the tree $T$ to its kernel $\k(T)$ and the second one is essentially a quotient map on half-plane lattices.

To define the required maps, we call every vertex $v$ on the kernel $\k(T)$ a \emph{parent}, and a vertex $v'$ on $T\setminus \k(T)$ is said to be a \emph{child} of $v$ if $v'$ can be connected to $v$ through a path without intersecting with the kernel. See Figure \ref{pa} as an illustration. Therefore, by definition every child has exactly one parent since the tree endowed with the word metric is a geodesic metric space where two vertices can be connected by a unique geodesic. Moreover, it follows from the topological uniformness condition that every parent has at most finitely many children.

\begin{figure}
    \begin{minipage}[t]{0.5\linewidth}
    \centering
    \includegraphics[width=7cm]{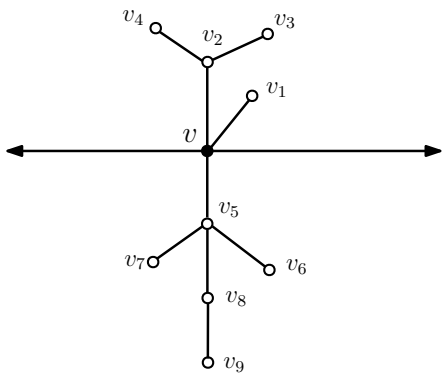}
    \end{minipage}%
 \begin{minipage}[t]{0.5\linewidth}\vspace{-6cm}
\caption{An illustration of the notion of parents and children. Shown is a portion of the tree $T$ around a vertex on the kernel in which the arrowed line denotes part of the kernel $\k(T)$. By definition, the vertex $v$ is a parent, while all the other vertices marked, namely $v_1, v_2, \dots, v_9$ in the figure, are the children of $v$.}
    \label{pa}
    \end{minipage}
\end{figure}

We will define a map
$$\varphi: \Gamma_{\infty}^{*}\to\Sigma_{\infty}^{*}$$
by
\begin{equation}\label{eq31}
\varphi(v)=
\begin{cases}
~\varphi_1(v), & \text{$v\in T$~;}\\
~\varphi_2(v), & \text{$v\in \Gamma_{\infty}^{*}\setminus T$}
\end{cases}
\end{equation}
such that $\varphi$ is a quasi-isometry. We start with the construction of the quasi-isometry 
$$\varphi_1: T \to \k(T).$$
The map is defined locally. To be more precise, with $v$ as a parent the map $\varphi_1$ sends the union of $v$ and all its children to $v$. Then the topological uniformness condition implies that $\varphi_1$ is actually a quasi-isometry. The first condition in Definition \ref{qi} is easy to check. For the second one we choose two arbitrary vertices $v_1$ and $v_2$ on $T$. By $(T3)$ in the TUC there is a constant $M$ not depending on $v_1$ and $v_2$ such that $\dist(v_j, \k(T))\leq M$ for $j=1,2$. Moreover, suppose that the parents of $v_j$ are $u_j$ for $j=1,2$. Then by our definition of $\varphi_1$ we see that $\varphi_1(v_j)=u_j$. Thus
\begin{equation}\label{est1}
d(\varphi_1(v_1), \varphi_1(v_2))=d(u_1,u_2)\leq d(v_1,v_2).
\end{equation}
On the other hand we know that
$$d(v_1,v_2)=d(u_1,u_2)+d(v_1,\k(T))+d(v_2,\k(T))\leq d(u_1,u_2) + 2M.$$
Therefore, we see that
\begin{equation}\label{est2}
d(\varphi_1(v_1), \varphi_1(v_2))=d(u_1,u_2)\geq d(v_1,v_2)-2M.
\end{equation}
Combining the above two estimates \eqref{est1} and \eqref{est2} we immediately see that $\varphi_1 : T\to\k(T)$ is a quasi-isometry.
\begin{figure}[htbp]
\centering
\includegraphics[width=12cm]{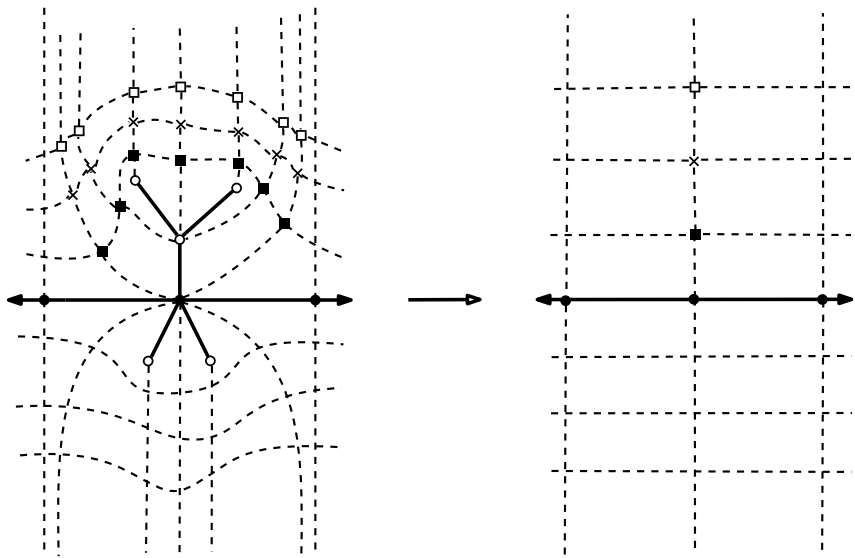}
\caption{The quasi-isometric map is defined by sending every vertex in $T$ to its parent, every vertex which is not in $T$ and has combinatorial distance $k$ to the unique vertex which is not in $\k(T)$ and has combinatorial distance $k$ to the kernel as shown in the figure. In the figure, all boxes, crosses and square vertices on the left are sent to the box, cross and square vertex on the right respectively. All vertices on the tree but not on the kernel (marked as circles) are sent to their parents.}
\label{quasimap}
\end{figure}

\medskip
To get the map $\varphi$ we still need to find a map $\varphi_2$ which is defined on the remaining part of $\Gamma_{\infty}^{*}$. For this purpose, let $u$ be a vertex on $\Gamma_{\infty}^{*}\setminus T$ (this is equivalent to saying that $u$ belongs to the embedded half-plane lattice). Then there is exactly one vertex on $T$ which minimizes the distance between $u$ and $T$. Thus as we defined before, this unique vertex has a unique parent $v$ on $\k(T)$. In this sense, we also say that $v$ is a parent of $u$. Denote by $T_v$ the union of $v$ and all its children on $T$. Moreover, suppose that the distance between $u$ and $T_v$ is $k:=k(u)\in\mathbb{N}$. Note that for the parent $v$ of $u$, in each component of $\C\setminus\k(T)$ there is exactly one vertex $u'$ in $\Sigma_{\infty}^{*}\setminus \k(T)$ such that $\dist(u',v)=\dist(u',\k(T))=k(u)$. Then we define the map 
$$\varphi_2:\Gamma_{\infty}^{*}\setminus T \to \Sigma_{\infty}^{*}\setminus \k(T), ~\quad \varphi_2(u)=u',$$
where $u'$ is the unique vertex corresponding to $u$ as discussed above. Now we show that $\varphi_2$ is a quasi-isometry in each component of $\Gamma_{\infty}^{*}\setminus T$. Notice that $\Gamma_{\infty}^{*}\setminus T$ is not connected but consists of finitely many components due to $(T2)$ in the TUC. We only need to check the second condition in Definition \ref{qi}. Let $u_1$ and $u_2$ be two vertices in one component of $\Gamma_{\infty}^{*}\setminus T$ whose parents are $v_1$ and $v_2$ respectively. First note that
\begin{equation}\label{est3}
d(\varphi_2(u_1), \varphi_2(u_2))\leq d(u_1,u_2),
\end{equation}
since basically the map $\varphi_2$ is a shrinking of the half-plane lattice. From the other side, without loss of generality we assume that $k(u_1)\geq k(u_2)$. Since by the TUC, at each $v_j$ there are only finitely many vertices on each component of $T\setminus\k(T)$ and the number is uniformly bounded above by some constant $A$ depending only on the TUC.
Now we have
\begin{equation}\label{est4}
\begin{aligned}
d(u_1, u_2)&\leq d(v_1,v_2)+2A\cdot B + k(u_1)-k(u_2)\\
&=d(v_1, v_2)+2A\cdot B+d\left(\varphi_2(u_1),v_1\right)-d\left(\varphi_2(u_2),v_2\right)\\
&\leq d(\varphi_2(u_1), \varphi_2(u_2))+2A\cdot B,
\end{aligned}
\end{equation}
where $B$ is a constant which bounds the local valence of $T$. By $(T1)$ this constant depends only on the tree. Therefore, $\varphi_2$ is a quasi-isometry on each component by \eqref{est3} and \eqref{est4}.

\bigskip
Now it is left to show that the map $\varphi$ defined in \eqref{eq31} is a quasi-isometry. Clearly it is only necessary to check the case when two vertices are chosen in different components of $\Gamma_{\infty}^{*}\setminus T$. Let $u_1, u_2$ be two such vertices with parents $v_1$ and $v_2$ respectively. Then
\begin{equation}\label{est5}
\begin{aligned}
d(u_1, u_2)&\leq d(u_1, T_{v_1})+M+d(v_1, v_2)+d(u_2, T_{v_2})+M\\
&=d(\varphi(u_1), v_1)+d(v_1, v_2)+d(\varphi(u_2), v_2)+2M\\
&=d(\varphi(u_1),\varphi(u_2))+2M.
\end{aligned}
\end{equation}
Moreover, it is easy to see that
\begin{equation}\label{est6}
d(\varphi(u_1), \varphi(u_2))\leq d(u_1,u_2).
\end{equation}
Thus, by the above discussion the map $\varphi$ is a quasi-isometry. For an illustration of the map $\varphi$, see Figure \ref{quasimap}.
\end{proof}

To prove Theorem \ref{rea} we will show that $\Sigma_{\infty}^{*}$ is of parabolic type. To this aim, we introduce a standard model $\Sigma_s$ for each $\Sigma_{\infty}^{*}$. Let $N$ be the number of complementary components of $\k(T)$ in the plane. Denote by $T_s$ an infinite tree in the plane which satisfies:
\begin{itemize}
\item[(1)] $T_s$ has $N$ complementary components in the plane;
\item[(2)] $T_s$ has exactly one vertex of valence $N$ and all other vertices have local valence two.
\end{itemize}
Then the standard model $\Sigma_{s}$ is defined from $T_s$ by embedding in each complementary components of $T_s$ a half-plane lattice, identifying the boundary vertices of $T_s$ with the boundary vertices of the half-plane lattice. See the right-hand side of Figure \ref{standard22} in case that $N=4$ where $T_s$ is the graph marked as solid lines.

\begin{figure}[htbp]
    \centering
     \includegraphics[width=13cm]{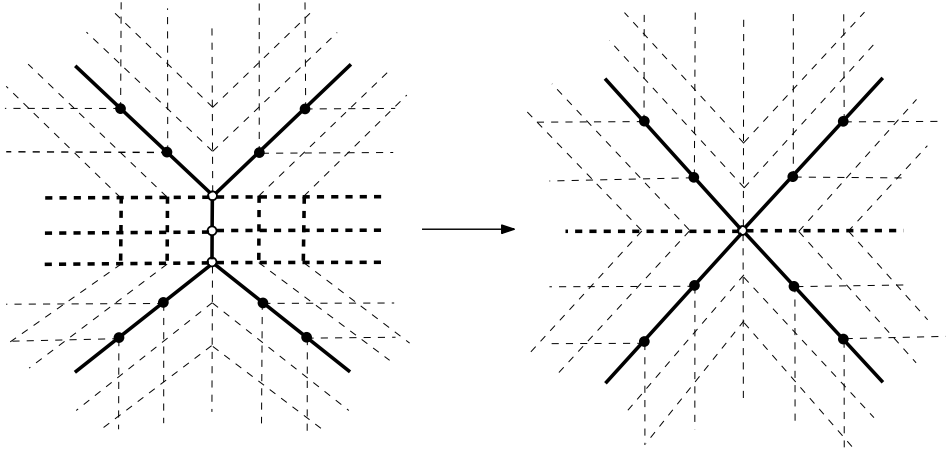}
     \caption{On the left is the kernel $\k(T)$ of a tree $T$ (black) and $\Sigma_{\infty}^{*}$. On the right hand side is the standard model $\Sigma_s$. The map is then defined as identifying lines marked as deep dark dashed lines.}
     \label{standard22}
\end{figure}

\begin{prop}\label{prop2}
$\Sigma_{\infty}^{*}$ is quasi-isometric to $\Sigma_s$.
\end{prop}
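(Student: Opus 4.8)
The plan is to construct the quasi-isometry in two stages, in close parallel with the proof of Theorem~\ref{equtype}: first a quasi-isometry $\psi$ between the underlying trees $\k(T)$ and $T_s$, then an extension of $\psi$ across the embedded half-plane lattices. We may assume $N\geq 2$; if $N=1$ the kernel is by definition a semi-infinite path, which is already (isometric to) $T_s$, and there is nothing to prove. The structural fact I would isolate first is that $\k(T)$ is itself, up to a bounded perturbation, a copy of $T_s$: there is a finite subtree $F\subset\k(T)$ such that $\k(T)\setminus F$ is a disjoint union of exactly $N$ rays $R_1,\dots,R_N$, each attached to $F$. Indeed, $\k(T)$ is the union of the $N$ bi-infinite boundary paths of its complementary components, hence has at most $2N$ ends; fix a vertex $o$ and choose $\rho$ so large that every component of $\k(T)\setminus B_\rho(o)$ is one-ended, $B_\rho(o)$ being the word-metric ball. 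Since every vertex of $\k(T)$ lies on a bi-infinite path in $\k(T)$, the graph $\k(T)$ has no vertex of local valence one; and a one-ended subtree of $\k(T)$ can contain no vertex of local valence $\geq 3$ either, for at such a vertex at least two of the emanating branches would be finite and would therefore carry a leaf. Thus each component of $\k(T)\setminus B_\rho(o)$ has all vertices of local valence two, i.e.\ is a ray; being pairwise disjoint these rays reach distinct ends, and a direct planarity count shows there are exactly $N$ of them. Setting $F:=\k(T)\cap\overline{B_\rho(o)}$ (a finite tree, as $T$ is locally finite), define $\psi\colon\k(T)\to T_s$ by collapsing $F$ onto the valence-$N$ vertex of $T_s$ and sending each $R_i$ isometrically onto the $i$-th prong, the labelling chosen so that the cyclic order of the complementary components around $\k(T)$ matches that of the prongs around $T_s$. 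Exactly as in \eqref{est1}--\eqref{est2} one gets $\dist(\psi(x),\psi(y))\leq\dist(x,y)\leq\dist(\psi(x),\psi(y))+2\diam(F)$, so $\psi$ is a quasi-isometry.

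Next I would extend $\psi$ to $\Psi\colon\Sigma_{\infty}^{*}\to\Sigma_s$, keeping $\Psi=\psi$ on $\k(T)$. Recall that $\Sigma_{\infty}^{*}$ is $\k(T)$ with a half-plane lattice $\mathbb{Z}\times\mathbb{N}_0$ embedded in each complementary component, its bottom row $\mathbb{Z}\times\{0\}$ identified with the boundary path of that component, and $\Sigma_s$ is the analogous extension of $T_s$. Reading along the boundary of the $i$-th complementary component of $\k(T)$, the vertices appear as a ray contained in $R_i$, then a walk through $F$ of length $\leq 2\diam(F)$, then a ray contained in $R_{i+1}$, and $\psi$ is an isometry on the two rays while it collapses the middle walk to the hub. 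Accordingly I extend $\psi$ over this lattice column by column: a column over a vertex of $R_i$ or $R_{i+1}$ is mapped by the obvious isometry onto the corresponding column of the half-plane lattice in the $i$-th complementary component of $T_s$, while the at most $2\diam(F)$ columns over the middle walk are all mapped into the single column over the hub. This is precisely the ``shrinking of a half-plane lattice'' performed for the map $\varphi_2$ in the proof of Theorem~\ref{equtype} (compare also \cite[Theorem C]{merenkov1}); it is a quasi-isometry of each lattice onto its image with constants controlled by $\diam(F)$, and it agrees with $\psi$ on the shared boundary. That the glued map $\Psi$ is globally a quasi-isometry is then checked exactly as in \eqref{est5}--\eqref{est6}: condition~(1) of Definition~\ref{qi} is immediate, and for condition~(2) one joins two given vertices of $\Sigma_{\infty}^{*}$ by a chain through $\k(T)$ whose length is comparable to their distance---passing from a lattice vertex to its foot on the tree costs exactly its height, which $\Psi$ distorts boundedly---so the piecewise estimates combine into a two-sided linear bound, with an additive constant absorbing $\diam(F)$ and the bound on the local valence of $T$ from $(T1)$.

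The step I expect to be the main obstacle is the structural fact of the first paragraph: that $\k(T)$ is, up to a bounded set, just $N$ disjoint rays. This is where the definition of the kernel is used essentially---once to exclude finite branches of $\k(T)$ and once to exclude vertices of local valence $\geq 3$ outside a bounded set---together with the observation that finitely many complementary components forces finitely many ends. Once this is in hand, the second stage is a routine repetition of the $\varphi_2$-construction already carried out in Theorem~\ref{equtype}, and Proposition~\ref{prop2} follows.
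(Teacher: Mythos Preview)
Your proposal is correct and follows essentially the same approach as the paper, which only sketches the proof: collapse the finite ``core'' of $\k(T)$ (the vertices of valence $\geq 3$ and the paths between them) to the single high-valence vertex of $T_s$, send the remaining rays isometrically onto the prongs, and then extend over the half-plane lattices exactly as for $\varphi_2$ in Theorem~\ref{equtype}. Your first paragraph makes precise the structural fact (that $\k(T)$ is a finite subtree with $N$ rays attached) which the paper leaves implicit, but otherwise the two arguments coincide.
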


\begin{proof}
We only sketch the idea of proof. First define a quasi-isometry between $\k(T)$ and $T_s$, which is, roughly speaking, sending all vertices with local valence greater than two and all vertices in between to the only vertex in $T_s$ with local valence greater than two. Then one needs to define a map from $\Sigma_{\infty}^{*}\setminus\k(T)$ to $\Sigma_s\setminus T_s$, which is similar as that of the map $\varphi_2$ defined in the proof of Theorem \ref{equtype}. See Figure \ref{standard22} for an explanation. The details are left to the reader.
\end{proof}

\begin{theorem}\label{pker}
$\Sigma_{s}$ is parabolic.
\end{theorem}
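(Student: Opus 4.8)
The plan is to apply Proposition \ref{howto} to $\Sigma_s$: I will exhibit a sequence $\{A_n\}$ of disjoint nested annuli in $\Sigma_s$ with $\mod A_n$ bounded above by a constant independent of $n$, so that $\sum_n(\mod A_n)^{-1}=\infty$ and parabolicity follows at once. Write $v_0$ for the unique vertex of $T_s$ of valence $N$, viewed as a vertex of $\Sigma_s$, and for each $i$ identify the $i$-th embedded half-plane lattice with $\Lambda=\{(a,b):a\in\mathbb{Z},\ b\in\mathbb{N}_0\}$ so that $v_0=(0,0)$ and the boundary line $\mathbb{Z}\times\{0\}$ is the union of the two rays of $T_s$ bounding that complementary component. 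Two elementary facts about $\Sigma_s$ are needed. First, $\Sigma_s$ is connected and of uniformly bounded valence: every vertex other than $v_0$ lies in, or on the common boundary line of, one or two copies of $\Lambda$ and so has valence at most $4$, while $v_0$ has valence at most $2N$; hence Proposition \ref{howto} applies. Second, a path in $\Sigma_s$ from a point $(a,b)$ of the $i$-th copy of $\Lambda$ to $v_0$ either stays in that copy or leaves it through its boundary line and returns, and a short triangle-inequality argument shows that neither kind of detour shortens it below the $\Lambda$-distance, so
\[
d\bigl((a,b),v_0\bigr)=|a|+b .
\]
Consequently the ball $B_R:=\{v:d(v,v_0)\le R\}$ meets each of the $N$ copies of $\Lambda$ in at most $(R+1)^2$ vertices, so $\#B_R\le C_N R^2$, and since the valence is bounded the number of edges incident with $B_R$ is at most $C_N' R^2$, for all $R\ge 1$.

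Next I would set $L_n:=3^{\,n}$ and let $A_n$ be the set of edges of $\Sigma_s$ having at least one endpoint $v$ with $L_n\le d(v,v_0)<2L_n$. Deleting $A_n$ leaves the two vertex sets $U_n^-=\{d(\cdot,v_0)<L_n\}$ and $U_n^+=\{d(\cdot,v_0)\ge 2L_n\}$; using the distance formula and the explicit gluing pattern, one checks that each induces a connected subgraph — $U_n^-$ because its trace on each copy of $\Lambda$ is the triangle $\{|a|+b<L_n\}$, which is connected, and all these traces contain $v_0$; and $U_n^+$ because its trace on each copy of $\Lambda$ is $\{(a,b):|a|+b\ge 2L_n\}$, which is connected, while adjacent copies are joined along the far parts (those at distance $\ge 2L_n$ from $v_0$) of their shared bounding ray. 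Thus each $A_n$ is an annulus in the sense of Section~2.5, and since $2L_n=2\cdot 3^n<3^{\,n+1}=L_{n+1}$ the $A_n$ are pairwise disjoint with $A_{n+1}$ separating $A_n$ from $\infty$; that is, $\{A_n\}$ is a nested sequence.

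Finally I would carry out the standard ``quadratic growth forces bounded annular modulus'' estimate. Any chain connecting the two boundary components of $A_n$ is a walk from a vertex at distance $\le L_n$ to a vertex at distance $\ge 2L_n$ from $v_0$; since $d(\cdot,v_0)$ changes by at most $1$ along each edge, such a walk must contain at least $L_n-O(1)$ edges both of whose endpoints lie at distance in $[L_n,2L_n)$ from $v_0$, hence at least $L_n-O(1)$ edges of $A_n$. Therefore the constant mass distribution $m(e)=c/L_n$ on $A_n$ (and $m(e)=0$ elsewhere), with $c$ a suitable absolute constant, is admissible, and since the edges of $A_n$ are incident with $B_{2L_n}$,
\[
\mod A_n\ \le\ \sum_{e}m(e)^2\ =\ \frac{c^{2}\,\#\{e\in A_n\}}{L_n^{2}}\ \le\ \frac{c^{2}\,C_N'\,(2L_n)^{2}}{L_n^{2}}\ =\ 4c^{2}C_N',
\]
a bound independent of $n$. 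Hence $\sum_n(\mod A_n)^{-1}=\infty$, and Proposition \ref{howto} shows that $\Sigma_s$ is parabolic. The only step needing slight care is the verification in the second paragraph that $U_n^-$ and $U_n^+$ each induce connected subgraphs, so that the $A_n$ are honest nested annuli; this reduces to the explicit but routine picture of how the $N$ half-plane lattices are attached to $T_s$ near $v_0$ and along the shared rays, as sketched above.
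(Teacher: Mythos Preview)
Your proof is correct and follows the same strategy as the paper: apply Proposition~\ref{howto} with concentric metric annuli about the central vertex $v_0$ and exploit the quadratic volume growth of $\Sigma_s$ to control their moduli. The only difference is tactical---the paper takes width-$1$ annuli and obtains $\mod A_n=\mathcal{O}(n)$, so the harmonic series diverges, whereas you take geometrically thickening shells $[3^n,2\cdot 3^n)$ to get a uniform bound on $\mod A_n$; both are standard manifestations of the same ``quadratic growth implies parabolicity'' estimate.
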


\begin{proof}
We sketch a proof. Suppose that $\k(T)$ has $N$ complementary components in the plane. According to Proposition \ref{howto}, it is enough to find a sequence of disjoint annuli $\{A_n\}$ in $\Sigma_s$ such that
\begin{equation}\label{pm}
\sum \frac{1}{\mod A_n}=\infty.
\end{equation}
To this aim, let $A_n$ be a finite graph which has combinatorial width $1$ in each component of $\Sigma_s \setminus T_s$. See Figure \ref{annuli}.  We consider a mass distribution on $\Sigma_s$ which assigns mass one to every edge. It is easy to check that this mass distribution is admissible for every chain. 
Then we can see that
$$\mod A_n =\mathcal{O}(n).$$
which implies \eqref{pm}. This completes the proof of the theorem.
\end{proof}

\begin{figure}
    \begin{minipage}[t]{0.6\linewidth}
    \centering
    \includegraphics[width=8cm]{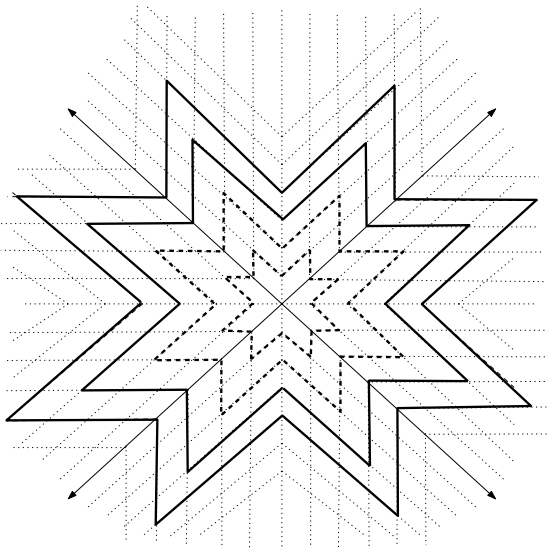}
    \end{minipage}%
 \begin{minipage}[t]{0.4\linewidth}\vspace{-7cm}
\caption{Shown in the figure are two annuli. The first annulus is $A_1$ with inner and outer boundaries marked as dashed lines, while the second annulus is $A_2$ with boundaries marked as solid lines. Both annuli have combinatorial width $1$.}
    \label{annuli}
    \end{minipage}
\end{figure}

\begin{proof}[Proof of Theorem \ref{rea}]
In some sense, our theorem says that the type of the surface is determined by the kernel of the tree. It follows from the Doyle-Merenkov criterion that the surface $(X,p)$ corresponding to $\Gamma$ is parabolic if and only if $\Gamma_{\infty}$ is parabolic. By Theorem \ref{stable} and Proposition \ref{equ1} we see that $\Gamma_{\infty}$ is parabolic if and only if $\Gamma_{\infty}^{*}$ is parabolic. Proposition \ref{prop2} and again Theorem \ref{stable} say that this holds if and only if $\Gamma_{s}$ is parabolic. Theorem \ref{pker} says that this is indeed the case. Thus $\Gamma_{\infty}$ is parabolic and hence the surface $(X,p)$ is parabolic. Therefore, we obtain an entire function $f$ with two critical values $\pm 1$ and no asymptotic values such that $T_f$ is homeomorphic to the given tree $T$.
\end{proof}

\section{Nevanlinna's theorem revisited}

Using similar ideas as in the proof of Theorem \ref{rea}, we give here a topological and combinatorial proof of a result due to Nevanlinna \cite{nevanlinna1} and Elfving \cite{elfving1}. For this purpose we need the notion of a logarithmic singularity. We follow the definition in \cite{bergweiler18}.

Let $(X,p)$ be a surface spread over the sphere in class $\s$. Denote by $A$ the set of singular values of $p$. Then
$$p: X\setminus p^{-1}(A) \to \hc\setminus A$$
is a covering map. Let $b\in A$ be fixed and $D_{\chi}(b,r)$ be a spherical disk centered at $b$ with radius $r$. Consider a function $U: r\mapsto U(r)$ which to each $r>0$ assigns a component $U(r)$ of $p^{-1}(D_{\chi}(b,r))$ in such a way that $r_1 < r_2$ implies that $U(r_1)\subset U(r_2)$. If $\cap_{r>0}U(r)=\emptyset$, then we say that $r\mapsto U(r)$ defines a singular point over $b$. A singular point over $b$ is said to be \emph{logarithmic} if for some $r>0$ the map $p|_{U(r)}: U(r)\to D_{\chi}(b,r)\setminus\{b\}$ is a universal covering.

Nevanlinna's theorem is then stated as follows.

\begin{theorem}[Nevanlinna, Elfving]\label{n}
Let $(X,p)$ be a surface spread over the sphere in the Speiser class with finitely many logarithmic singularities $(\geq 2)$ and finitely many critical points. Then the surface is parabolic.
\end{theorem}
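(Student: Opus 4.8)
## Proof Proposal

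The plan is to follow exactly the strategy developed for Theorem~\ref{rea}: build a Speiser graph from the combinatorial data of the singularities, pass to its extended dual, and locate a nested sequence of annuli with divergent reciprocal-modulus sum, invoking Proposition~\ref{howto}. The key structural simplification here is that a surface with only finitely many logarithmic singularities and finitely many critical points has, away from a compact piece, a very rigid combinatorial form: each logarithmic singularity over a value $b$ contributes an end on which $p$ is a universal covering of a punctured disk, which translated to the Speiser graph means a \emph{periodic} (indeed half-plane-type) end.

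First I would fix a base curve $L$ through the (finitely many) singular values $a_1,\dots,a_q$ and form the Speiser graph $\Gamma$ of $(X,p)$. Because $p$ has only finitely many critical points, all but finitely many vertices of $\Gamma$ are ``regular'' — the local sheet structure near them is the generic one — and because each logarithmic singularity is a universal covering of a punctured disk, each corresponding end of $\Gamma$ consists of an infinite chain of faces growing without bound, i.e.\ the end is eventually an unbounded face (or equivalently, after finitely many faces, carries the same combinatorial pattern as a half-plane lattice end). Thus $\Gamma$ decomposes as a finite central part $\Gamma_0$ together with finitely many ends $\mathcal{E}_1,\dots,\mathcal{E}_r$ (one per logarithmic singularity, counted with the sheets over each value), each of which is, up to finite modification, a half-plane end. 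Passing to the extended graph $\Gamma_\infty$ and its dual $\Gamma_\infty^*$ via Proposition~\ref{equ1} and Theorem~\ref{stable}, the type of the surface equals the type of $\Gamma_\infty^*$, which is quasi-isometric to a ``standard model'': finitely many half-plane lattices glued along a finite central graph, exactly in the spirit of $\Sigma_s$ from Section~3.

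The parabolicity of this standard model is then handled as in Theorem~\ref{pker}: take $A_n$ to be the annulus of combinatorial width $1$ lying at word-distance roughly $n$ from the central part, intersected with each half-plane end; assign mass one to every edge, which is admissible for every crossing chain; since the number of edges $A_n$ must cross is $\mathcal{O}(n)$, one gets $\mod A_n = \mathcal{O}(n)$, hence $\sum 1/\mod A_n = \infty$, and Proposition~\ref{howto} gives parabolicity of $\Gamma_\infty^*$, hence of $\Gamma_\infty$, hence of $(X,p)$.

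The main obstacle I expect is the first step: making precise the claim that a logarithmic singularity forces the corresponding end of the Speiser graph to be (eventually) of half-plane type, and that ``finitely many critical points'' confines all the exceptional combinatorics to a finite subgraph. This requires carefully unwinding the definition of logarithmic singularity (the universal-covering condition on $p|_{U(r)}: U(r)\to D_\chi(b,r)\setminus\{b\}$) and translating it into the face-adjacency structure of $\Gamma$ near that end — essentially identifying $U(r)$ with a half-plane and seeing that the preimage of the arcs $\gamma_j$ restricted there is a periodic ladder. Once this local picture is established, the rest is a routine repetition of the quasi-isometry and annulus-modulus arguments already carried out for the kernel $\k(T)$, with the finite central part playing the role that the finite ``non-sine'' portion plays in Goldberg's almost-periodic-ends argument mentioned in Section~3.1.
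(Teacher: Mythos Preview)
Your proposal is correct and follows essentially the same route as the paper: form the Speiser graph $\Gamma$, use the Doyle--Merenkov criterion to reduce the type question to that of $\Gamma_\infty$, observe that the finitely many critical points and logarithmic singularities force $\Gamma$ to be a finite central piece with finitely many half-plane-type ends, and then pass by quasi-isometry to a standard model whose parabolicity is established by the annulus argument of Theorem~\ref{pker}. The paper differs only cosmetically: instead of routing through the dual $\Gamma_\infty^{*}$ and Proposition~\ref{equ1}, it performs an explicit surgery on $\Gamma$ (collapsing each bounded face to a vertex and each multi-edge to a single edge) to obtain a tree $T$ equal to its own kernel, builds $\Sigma_\infty$ from $T$, and establishes the quasi-isometry $\Gamma_\infty \to \Sigma_\infty$ directly---but this is the same reduction in slightly different packaging.
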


The above form is due to Elfving \cite{elfving1}. Nevanlinna proved his theorem without allowing critical points \cite{nevanlinna1}. His proof uses Speiser graphs and some theory from complex differential equations; see also \cite[Chapter XI]{nevanlinna}. Right after the paper \cite{nevanlinna1}, Ahlfors gave another proof, in which the connection with quasiconformal mappings is implicitly mentioned \cite{ahlfors19}. Recently, a purely analytic approach to the above theorem is given by Langley \cite{langley1} using the Wiman-Valiron property of meromorphic functions with logarithmic singularities \cite{bergweiler3}. Here, we supply a purely combinatorial proof, which also uses Speiser graphs, but the theory of complex differential equations is not used. 

Since surfaces satisfying the condition in the theorem are parabolic, there exist meromorphic functions in the plane corresponding to these surfaces. The importance of these surfaces lies in the fact that the corresponding meromorphic functions are extremal in the sense of Nevanlinna's inverse problem. See \cite[Chapter 7]{goldbergmero} for explanations and more details, and also a proof of the above theorem.

\begin{proof}[Proof of Theorem $\ref{n}$]
The proof will be similar as that of Theorem \ref{rea}. The key observation is that up to a large disk in the graph which contains all critical points, the Speiser graph is essentially a ``tree" (when ``observed from far away").

We assume that the surface $(X,p)$ has $q$ logarithmic singularities and $m$ critical points. See Figure \ref{nevan34} for some examples with and without critical points.
\begin{figure}
    \begin{minipage}[t]{0.5\linewidth}
    \centering
    \includegraphics[width=5cm]{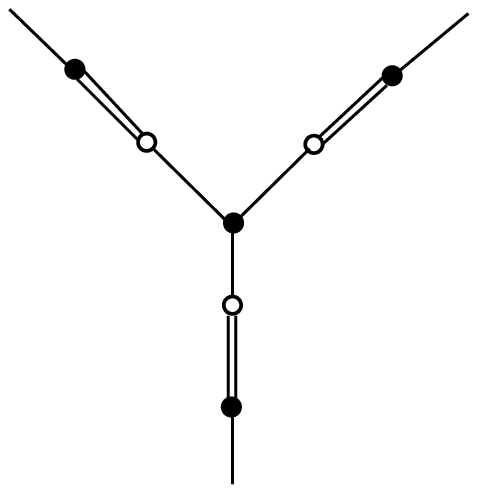}
    \end{minipage}%
    \begin{minipage}[t]{0.5\linewidth}
    \centering
    \includegraphics[width=5cm]{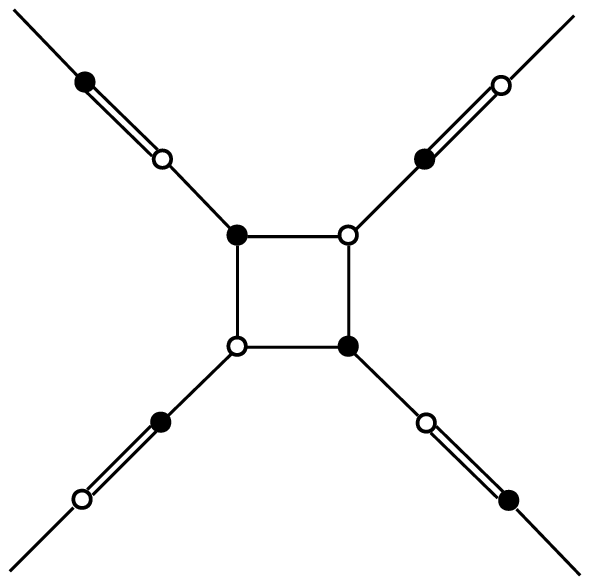}
    \end{minipage}%
    \caption{Some Speiser graphs. On the left is shown a Speiser graph with three logarithmic singularities and without critical points; the corresponding meromorphic function has polynomial Schwarzian derivative. On the right-hand side is a Speiser graph with one critical point (corresponding to the face with four edges on the boundary) and with four logarithmic singularities; the function is actually $z\mapsto e^{z^2}$. Both surfaces are ramified over three values.}
    \label{nevan34}
\end{figure}

We denote by $\Gamma$ the corresponding Speiser graph corresponding to $(X,p)$. By Theorem \ref{dmc}, to show that $(X,p)$ is parabolic we only need to embed a half-plane lattice in each unbounded face of $\Gamma$ to obtain an extended Speiser graph $\Gamma_{\infty}$ and show that $\Gamma_{\infty}$ is parabolic (note that we can do so since there is a uniform bound on the number of edges of faces). To achieve this, we need to do some surgery on the Speiser graph $\Gamma$. We first replace each face with finitely many edges on the boundary by a single vertex. Then if two vertices are connected by more than one edges, we replace all these edges by a single edge. After these operations, what is left is actually a tree which coincides with its kernel, denoted by $T$. Now in each unbounded face of $T$ we, as before, embed a half-plane lattice. Then we get a graph, denoted by $\Sigma_{\infty}$. It follows from the proof of Theorem \ref{rea} that $\Sigma_{\infty}$ is parabolic.

The rest of the proof proceeds as that of Theorem \ref{rea}. Since there are only finitely many critical points, it is easy to establish a quasi-isometry between $\Gamma$ and $T$. Then we establish a quasi-isometry in each component of $\Gamma_{\infty}\setminus \Gamma$ to the corresponding component of $\Sigma_{\infty}\setminus T$. This is similar as before. Then combining all these quasi-isometries we can define a quasi--isometry between $\Gamma_{\infty}$ and $\Sigma_{\infty}$. The parabolicity of $\Sigma_{\infty}$ implies that $\Gamma_{\infty}$ is parabolic. Thus $(X,p)$ is parabolic. The theorem follows.
\end{proof}

\section{Examples}

This section is devoted to the construction of some examples concerning the items in the TUC. On the one hand, we construct Shabat entire functions which show that every item in the TUC can be slightly extended. On the other hand, we show that if any one of the items is dropped, there may not be Shabat entire functions.

\subsection{Growing local valence}

We consider trees with growing local valences. First we show that there exist Shabat entire functions such that the local valence of preimages of one critical value tends to infinity.

\subsubsection{Parabolicity} There exist Shabat entire functions with unbounded local valence. Such functions can actually be constructed using the so-called MacLane-Vinberg method. Moreover, entire functions constructed in this way belong to the \emph{Laguerre-P\'olya class}, denoted by $\mathcal{LP}$, which consists of entire functions that are locally uniform limits of real polynomials with only real zeros. Thus such entire functions have only real zeros.

Let us consider the example in \cite[Example 1.7]{bergweiler6}, which was constructed using the above MacLane-Vinberg method and hence belongs to the Laguerre-P\'olya class. More precisely, the function, denoted by $g$, has two critical values $0$ and $1$ and no finite asymptotic values. It is easy to see that the map $f(z):=2g(z)-1$ is an entire function with two critical values $\pm 1$ and no finite asymptotic values, and hence is a Shabat entire function. By the properties of $\mathcal{LP}$ functions, the preimages of $-1$ are all real (which correspond to all zeros of $g$) whose multiplicities tend to $\infty$ quite fast. See Figure \ref{bfgtree}.

\begin{figure}[htbp] 
    \centering
     \includegraphics[width=13cm]{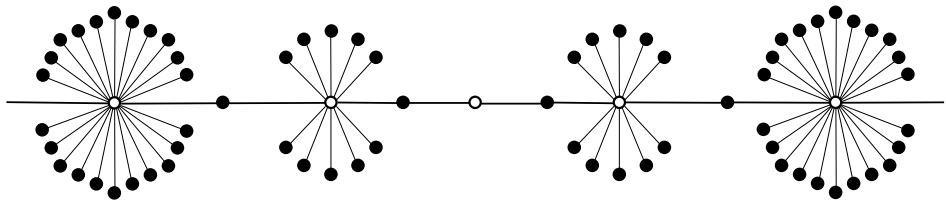}
     \caption{A rough sketch of the tree corresponding to the functions constructed in \cite{bergweiler6}. The local valence of critical points (marked as circles) corresponding to the critical value $-1$ tends to infinity fast.}
     \label{bfgtree}
\end{figure}

We provide here another example. For this and also for later purposes, we start with the Speiser graph of the sine function, which is shown in Figure \ref{sinegraph}. 
\begin{figure}[htbp] 
    \centering
     \includegraphics[width=13cm]{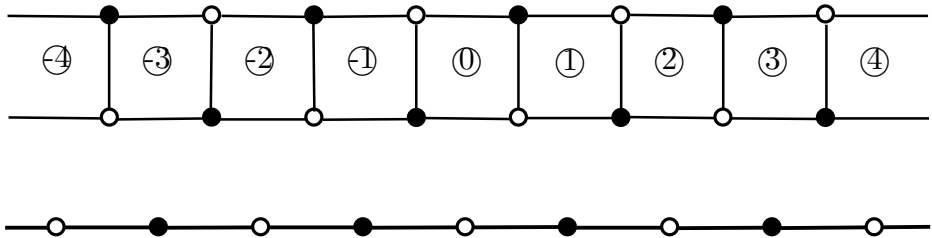}
     \caption{The Speiser graph of the sine function and the associated tree. In the graph, we choose one face as a starting face marked as \textcircled{0} for our constructions afterwards. The face to the right and the left are marked as \textcircled{1} and \textcircled{-1} respectively. We continue this marking as shown in the figure.}
     \label{sinegraph}
\end{figure}
Now on the upper and lower edge of face \circled{2} we add two vertices respectively such that the type of the vertices are arranged to be compatible such that the newly produced graph is still a Speiser graph (actually this can be done as along as we add an even number of vertices on the edge). We do a similar surgery to the face \circled{-2}. Then we continue this to all faces marked as \circled{2k}, where $2|k|$ vertices are added on the upper and lower edges of face \circled{2k} respectively and then add certain additional edges to make the graph to be a Speiser graph (local valence is $3$). Denote this Speiser graph by $\Gamma$. We obtain the following graph, shown in Figure \ref{unbg}. 
\begin{figure}[htbp] 
    \centering
     \includegraphics[width=16cm]{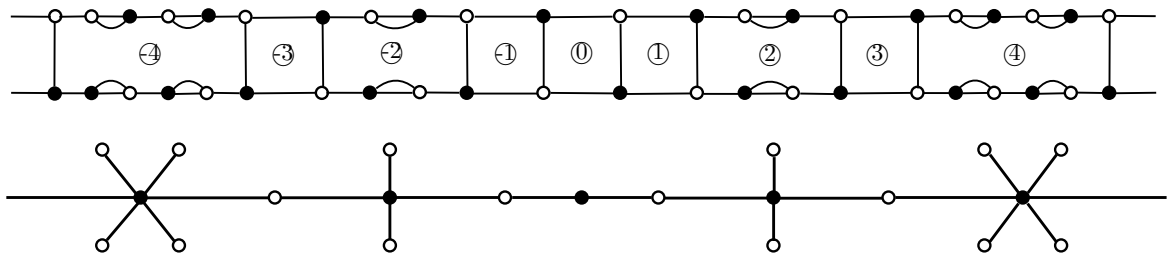}
     \caption{A Speiser graph with growing local valence and its corresponding tree.}
     \label{unbg}
\end{figure}
Now we need to show that $\Gamma$ corresponds to a surface spread over the sphere of parabolic type. Note first that associated with $\Gamma$ is a tree which is locally finite, satisfies the condition $(T2)$ and $(T3)$ in the TUC but with growing local valence (that is, $(T1)$ is violated). We use a criterion due to Nevanlinna and Wittich \cite[Chapter VII]{wittich1}, which can also be used to prove Nevanlinna's theorem in case that there are no critical points. Since every vertex in the graph can be connected to $\infty$ with a path in $\C$ without intersecting with $\Gamma$, to use the abovementioned criterion we only need to fix a vertex as generation $0$. Then the generation $1$ will be the vertices that can be connected to the generation $0$. Let $\Gamma^n$ be the portion of $\Gamma$ with vertices of generation at most $n$. Then a vertex in $\Gamma^n$ is in $\partial\Gamma^n$ if it can be connected to infinity with a path in $\C$ with no intersection with $\Gamma$. Denote by $s(n)$ the number of vertices on $\partial\Gamma^n$. Nevanlinna-Wittich's criterion states as follows: The surface spread over the sphere corresponding to $\Gamma$ is parabolic if $\sum \frac{1}{s(n)}=\infty$. It is easy to check that the graph in Figure \ref{unbg} satisfies that $s(n)=\mathcal{O}(n)$ and hence the existence of a Shabat entire function with a growing local valence follows.

\begin{remark}
Another method to show that the surface associated with the Speiser graph in Figure \ref{unbg} is parabolic is by using Theorem \ref{dmc}: We consider the extended Speiser graph $\Gamma_4$ and then find a sequence of disjoint annuli of width one to show that Proposition \ref{howto} is satisfied. Details are omitted.
\end{remark}

\subsubsection{Hyperbolicity}

In this part, we will construct a tree which satisfies the TUC except $(T1)$. Our example here is based on the surface given by Sutter \cite{sutter}; see also \cite{pfluger1}. The Speiser graph of this surface, denoted by $\Sigma$, is shown in Figure \ref{sutterg}. 
\begin{figure}[htbp] 
    \centering
     \includegraphics[width=11cm]{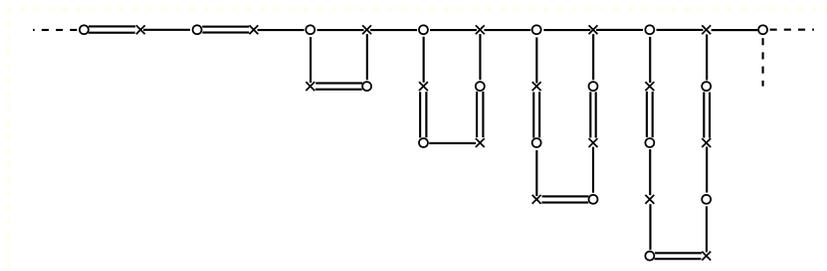}
     \caption{The Speiser graph used by Sutter to construct a hyperbolic surface.}
     \label{sutterg}
\end{figure}
However, this surface does not meet our requirement since it has a logarithmic singularity over some finite value. Nevertheless, we can modify this example for our aim. More precisely, we consider the Speiser graph $\Gamma$ shown in Figure \ref{og}, where two unbounded faces represent two logarithmic singularities over infinity while faces with finitely many edges on the boundary correspond to critical points which are preimages of critical values $\pm 1$. The corresponding tree is also shown in the figure.
\begin{figure}[htbp]
    \centering
     \includegraphics[width=11cm]{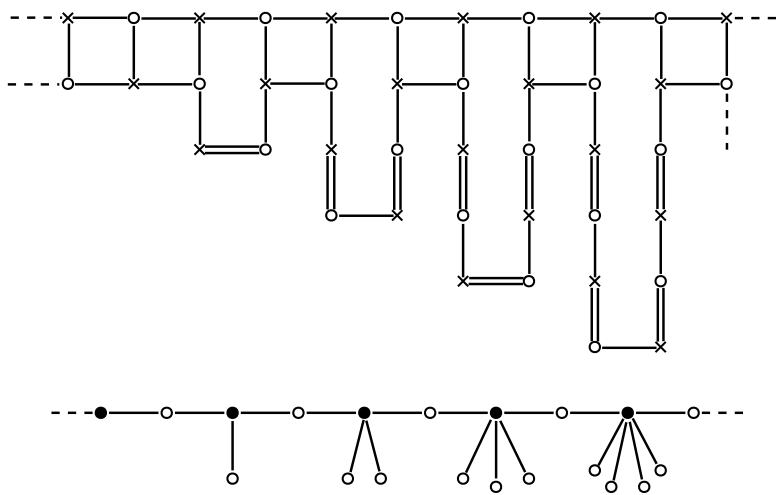}
     \caption{The Speiser graph and the corresponding tree that we use to construct a hyperbolic surface. There exists a sequence of critical points (marked as disks in the tree) with multiplicities tending to infinity.}
     \label{og}
\end{figure}

We need to show that the surface corresponding to our Speiser graph is hyperbolic. For this aim, we consider the extended Speiser graphs $\Sigma_3$ and $\Gamma_3$ of $\Sigma$ and $\Gamma$ respectively. By Theorem \ref{dmc} and the hyperbolicity of the surface corresponding to $\Sigma$ we know that $\Sigma_3$ is hyperbolic. Moreover, to show that the surface corresponding to $\Gamma$ is hyperbolic, again by Theorem \ref{dmc} it is enough to show that $\Gamma_3$ is hyperbolic. Now by Theorem \ref{stable} we only need to show that $\Gamma_3$ is quasi-isometric to $\Sigma_3$. But this is easy to establish and we omit the detail. Moreover, we can arrange the markings of the Speiser graph $\Gamma$ such that the two unbounded faces represent two logarithmic singularities over $\infty$ while all finite faces correspond to critical values $\pm 1$.

\subsection{Non-uniform distance from the kernel}

\subsubsection{Parabolicity}

To show that there exists a tree which has a true form, we consider the example in \cite[Section 3.3]{merenkov3}. The example constructed there has exactly two critical values $0$ and $1$ and no finite asymptotic values. Moreover, there is a sequence of vertices on the tree whose distance to the kernel tends to infinity. The surface spread over the sphere corresponding to this tree is parabolic, and thus there exists a Shabat entire function (by using affine maps which maps $0$ and $1$ to $-1$ and $1$ respectively). The vertices of this tree do not have a uniform distance to the kernel.
\begin{figure}[htbp]
    \centering
     \includegraphics[width=16cm]{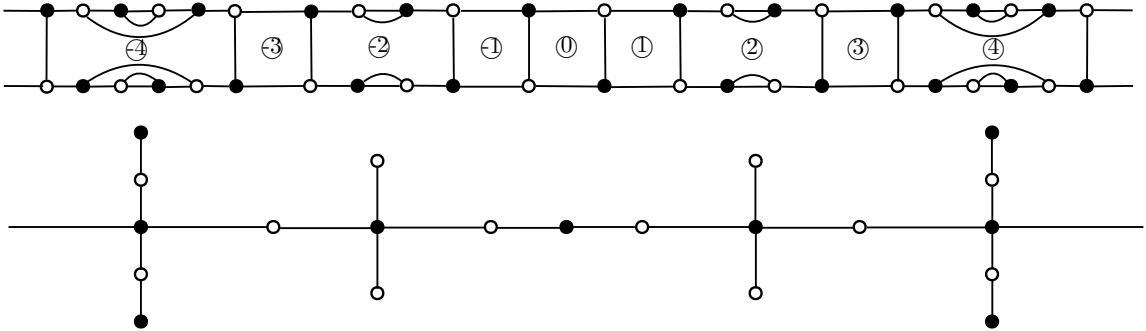}
     \caption{A Speiser graph (the upper figure) associated to a tree (the lower figure) with unbounded distance to the kernel.}
     \label{und}
\end{figure}

Another example can be constructed as follows. We start again with the Speiser graph of the sine function, as shown in Figure \ref{sinegraph}. The marking stays the same. As before, on the upper and lower edges of the face \circled{2k} we add $2|k|$ vertices respectively. But this time we obtain a different Speiser graph by connecting these newly added vertices in another way, which is shown in Figure \ref{und}. Denote this graph by $\Gamma$.

The parabolicity of the surface corresponding to this Speiser graph follows from the Nevanlinna-Wittich criterion as we used before. As a matter of fact, we can also use Theorem \ref{dmc}. This is due to the fact that every face in the graph has a uniformly bounded number of edges on the boundary ($\leq 8$) except for those faces with infinitely many edges on the boundary. Then one can consider the extended Speiser graph $\Gamma_{5}$, which is actually obtained by embedding in each unbounded face a half-plane lattice. To show the parabolicity of $\Gamma_5$ it is necessary to find a sequence of disjoint nested annuli and use Proposition \ref{howto}. We leave the details to the reader.

\subsubsection{Hyperbolicity}

As before, we start with the Speiser graph of the sine function, see Figure \ref{sinegraph}. On the upper edges of faces \circled{1} and \circled{-1} we add $A_1$ vertices respectively. On the upper edges of faces \circled{2} and \circled{-2} we add $A_2$ vertices respectively. More generally, on the upper edge of face \circled{k} we add $A_k$ vertices. Here $A_k\in\mathbb{N}$ and $A_k \to\infty$ as $k\to\infty$. Connecting these newly added vertices properly, we can get a Speiser graph of valence $3$. Denote by $\Gamma$ the Speiser graph obtained. In Figure \ref{undhy} it is shown a Speiser graph constructed from the above surgery.

\begin{figure}[htbp]
    \centering
     \includegraphics[width=16cm]{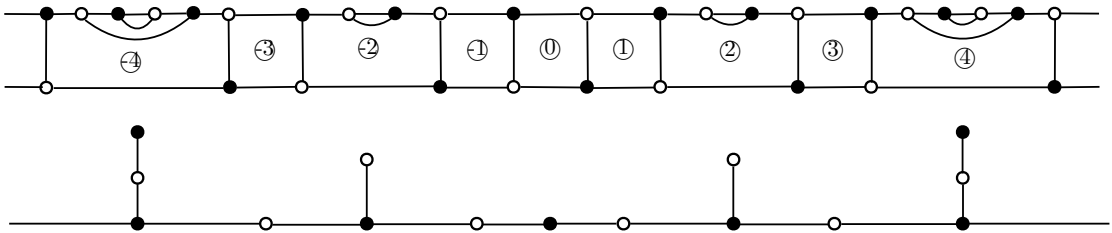}
     \caption{A Speiser graph associated to a tree with non-uniform distance to the kernel. The tree below is just half of the one in Figure \ref{und}.  The extended Speiser graph will be hyperbolic.}
     \label{undhy}
\end{figure}

The Speiser graph $\Gamma$ has two faces with infinitely many edges on the boundaries and all remaining faces have a uniformly bounded number of edges on the boundaries ($\leq 6$). Therefore, we can consider the extended Speiser graph $\Gamma_{4}$, which is actually obtained by embedding one half-plane lattice in the two unbounded faces. Now if $\Gamma_4$ is hyperbolic, then by Theorem \ref{dmc} there does not exist any entire function.

To show that $\Gamma_4$ is hyperbolic, we shall use an isoperimetric inequality. Let $V$ be a vertex set in a graph $G$ and $\partial V$ the boundary of $V$ consisting vertices with neighbours outside of $V$. Let $f$ be a non-decreasing positive real function defined on $\mathbb{N}$. We say that $G$ satisfies an $f$-isoperimetric inequality if there exist a constant $c>0$ such that, for each finite vertex set $V$ of $G$,
\begin{equation}\label{isop}
|\partial V| > c f(|V|),
\end{equation}
where $|\cdot|$ denote the cardinality. If the above inequality holds for all finite vertex sets $V$ which contains a fixed vertex (root) $v$ and induce connected subgraphs in $G$, then $G$ is said to satisfy a rooted, connected $f$-isoperimetric inequality. A theorem by Thomassen says that each connected graph with a uniformly bounded local valence satisfying a rooted, connected $f$-isoperimetric inequality for $f(k)=k^{1/2+\varepsilon}$ and some $\varepsilon>0$ is hyperbolic\footnote{Here $\varepsilon$ cannot be equal to $0$, since the two dimensional lattice is parabolic by a theorem of P\'olya and in this case $\varepsilon=0$.}. See \cite[Theorem 3.4]{thomassen}. It is easy to see that, for the graph $\Gamma_4$ if we let $A_k$ tends to $\infty$ rapidly fast, then the inequality \eqref{isop} is satisfied for some $\varepsilon>0$. Hence $\Gamma_4$ is hyperbolic.

\subsection{Infinitely many complementary components}

As we discussed before, any homogeneous tree of valence no less than $3$ does not give us any entire function. On the contrary, to show that there exists a Shabat entire function from a tree with infinitely many complementary components, one can use Bishop's technique of quasiconformal folding. As a simple example, we can consider the tree in \cite[Figure 5]{bishop1}, with which Bishop constructed a Shabat entire function with rapid growth. In fact, it follows from the Denjoy-Carleman-Ahlfors theorem (cf. \cite[Chapter 5, Theorem 1.4]{goldbergmero}) that any Shabat entire function constructed from trees with infinitely many complementary components must be of infinite order.

We will consider the tree obtained from the one in \cite[Figure 5]{bishop1} after the application of the quasiconformal folding. It follows from this technique that this tree will satisfy our condition $(T1)$ and $(T2)$ but not $(T3)$. The details are omitted.

Another more direct example of trees with infinitely many complementary components but satisfying $(T1)$ and $(T2)$ is obtained by considering the function $f(z)=\cos(\pi\cos(z))$. A sketch of the corresponding tree is shown in Figure \ref{cosinetree}.
\begin{figure}[htbp]
    \centering
     \includegraphics[width=10cm]{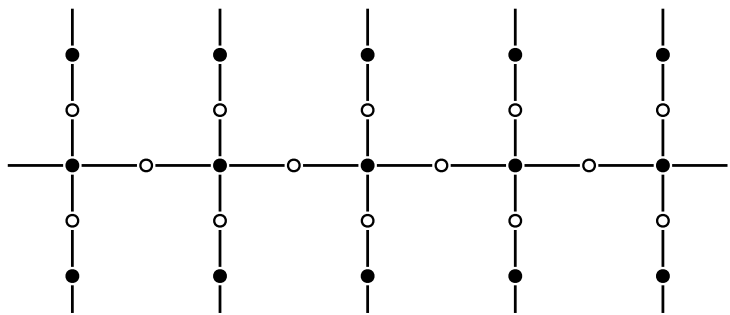}
     \caption{A sketch of the tree $T_f$ for $f(z)=\cos(\pi\cos(z))$. Here dots are the preimages of $-1$ and circles are the preimages of $+1$.}
     \label{cosinetree}
\end{figure}

\end{document}